\numberwithin{equation}{section}
\theoremstyle{plain}
\newtheorem{thm}{Theorem}[section]
\newtheorem{prop}[thm]{Proposition}
\newtheorem{cor}[thm]{Corollary}
\newtheorem{lem}[thm]{Lemma}
\newtheorem{conjecture}[thm]{Conjecture}
\theoremstyle{definition}
\newtheorem{example}[thm]{Example}
\theoremstyle{remark}
\newtheorem{rmk}[thm]{\bf Remark}
\newcommand{\Modd}{\operatorname{Mod-\!}}
\newcommand{\Grr}{\operatorname{Gr-\!}}
\def\xra{\xrightarrow[]{}}
\def\a{\alpha}
\def\v{\varepsilon}
\def\VV{\mathcal{V}}
\def \Z{\mathbb Z}
\def\-{\text{-}}
\def\TT{\mathcal{T}}
\def\LL{\mathcal{L}}
\newcommand{\gr}{\operatorname{gr}}
\def \Mn {M^{\gr}_E}
\def \Mnb {M_{\overline{E}}}
\begin{document}

\title{The talented monoid of a Leavitt path algebra}

\author{Roozbeh Hazrat}
\address{Roozbeh Hazrat: 
Centre for Research in Mathematics\\
Western Sydney University\\
Australia} \email{r.hazrat@westernsydney.edu.au}

\author{Huanhuan Li}
\address{
Huanhuan Li: Centre for Research in Mathematics\\
Western Sydney University\\
Australia} \email{h.li@westernsydney.edu.au}

\subjclass[2010]{18B40,16D25}

\keywords{Leavitt path algebra, graded Grothendieck group, graded ring, graph monoid}

\date{\today}

\begin{abstract} 

There is a tight relation between the geometry of a directed graph and the algebraic structure of a Leavitt path algebra associated to it. In this note, we show a similar connection between the geometry of the graph and the structure of a certain monoid associated to it. This monoid is isomorphic to the positive cone of the graded $K_0$-group of the Leavitt path algebra which is naturally equipped with a $\mathbb Z$-action. As an example, we show that a graph has a cycle without an exit if and only if the monoid has a periodic element. Consequently a graph has Condition (L) if and only if the group $\mathbb Z$ acts freely on the monoid. We go on to show that the algebraic structure of Leavitt path algebras (such as simplicity, purely infinite simplicity, or the lattice of ideals) can be described completely via this monoid. Therefore an isomorphism between the monoids (or graded $K_0$'s) of two Leavitt path algebras implies that the algebras have similar algebraic structures.  
These all confirm that the graded Grothendieck group could be a sought-after complete invariant for the classification of Leavitt path algebras. 
\end{abstract}

\maketitle

\section{Introduction}

The theory of Leavitt path algebras has sparked a substantial amount of activity in recent years culminating in finding, on the one hand, a complete algebraic structure of these algebras via the geometry of the associated graphs and in finding, on the other hand, a complete invariant for the classification of the algebras. The first two papers in the subject appeared in 2005 and 2006 \cite{abrams2005, ara2006}. The first paper \cite{abrams2005} gave a graph criteria when these algebras are simple and the second paper \cite{ara2006} proved that the non-stable $K$-theory of these algebras can be described via a natural monoid associated to their graphs. In this paper we tie these two threads together by showing  how the geometry of a graph is closely related to the structure of \emph{graded} monoid of the associated Leavitt path algebra.

Let $E$ be a (row-finite) directed graph, with vertices denoted by $E^0$ and edges by $E^1$. The monoid $M_E$ considered in \cite{ara2006} is defined as the free abelian monoid over the vertices  subject to identifying a vertex with the sum of vertices it arrives at by the edges emitting from it: 
\begin{equation*}
M_E= \Big \langle \, v \in E^0 \, \, \Big \vert \, \,  v= \sum_{v\rightarrow u} u \, \Big \rangle.
\end{equation*}

It was proved in \cite{ara2006}, using Bergman's machinery, that $M_E = \mathcal V (L_F(E))$. Here  $\mathcal V(L_F(E))$ is the monoid of finitely generated projective modules  of the Leavitt path algebra $L_F(E)$, with coefficients in a field $F$. Thus the group completion of $M_E$ retrieves the Grothendieck group $K_0(L_F(E))$. For half a century, this group has played a key role in the classification of  $C^*$-algebras and in particular graph $C^*$-algebras which are the analytic counterpart of Leavitt path algebras~\cite{tomforde1}. 

The ``graded'' version of this monoid is defined as  
\begin{equation*}
\Mn= \Big \langle \, v(i), v \in E^0, i \in \mathbb Z  \, \,  \Big \vert \, \, v(i)= \sum_{v\rightarrow u} u(i+1) \, \Big \rangle.
\end{equation*}
Note that the only difference from the monoid $M_E$ is that we index the vertices by $\mathbb Z$ and keep track of the transformations. There is a natural action of $\mathbb Z$ on $\Mn$:  the action of $n\in \mathbb Z$ on $v$ is defined by  $v(n)$ and denoted by ${}^n v$. It was proved 
in \cite{ahls} that $\Mn = \mathcal V^{\gr} (L_F(E))$ (see also Remark \ref{rmk}). Here  $ \mathcal V^{\gr} (L_F(E))$ is the monoid of graded finitely generated projective modules of the Leavitt path algebra $L_F(E)$. Thus the group completion of $\Mn$ is the graded Grothendieck group $K^{\gr}_0(L_F(E))$. The action of $\mathbb Z$ on $\Mn$ corresponds to the shift operation on graded modules over the Leavitt path algebra $L_F(E)$ which is naturally a $\mathbb Z$-graded ring. 

The aim of this note is to show that there is a beautiful and close relation between the geometry of a graph $E$ and the monoid structure of $\Mn$ parallel to the correspondence between the algebraic structure of $L_F(E)$ and the geometry of the graph $E$, as the figure below indicates. 

\begin{center}
\tikzstyle{decision} = [diamond, draw, fill=red!50]
\tikzstyle{line} = [draw,  -stealth, thick]
\tikzstyle{lined} = [draw, bend right=45, thick, dashed]

\tikzstyle{elli}=[draw, ellipse, top color=white, bottom color=white ,minimum height=8mm, text width=6.3em, text centered]
\tikzstyle{elli2}=[draw, ellipse ,minimum height=8mm, text width=6.3em, text centered]
\tikzstyle{elli3}=[draw, ellipse, top color=white, bottom color=blue!90 ,minimum height=8mm, text width=6.3em, text centered]

\tikzstyle{block} = [draw, rounded corners, rectangle, text width=8em, text centered, minimum height=15mm, node distance=7em]
\tikzstyle{block2} = [draw, rounded corners, rectangle, top color=white!80, bottom color=white, text width=8em, text centered,  minimum height=15mm, node distance=7em]


\begin{tikzpicture}[scale=0.7, transform shape]

\GraphInit[vstyle = Shade]

\node[block] (graph) {\bf  Geometry of the graph $\pmb E$};
\node[elli2, below of=graph, xshift=-12em, yshift=-3em] (ring) { \bf Algebraic structure of $\pmb {L_F(E)}$};
\node[elli2, below of=graph, xshift=12em, yshift=-3em] (monoid) {\bf Monoid structure of $\pmb \Mn$};

\tikzset{
  EdgeStyle/.append style = {<->, thin, dotted} 
  }
 \Edge (ring)(monoid)

\tikzset{
  EdgeStyle/.append style = {<->, bend right, dashed} 
  }
\Edge (graph)(ring)

\tikzset{
  EdgeStyle/.append style = {<->, bend left} 
  }
\Edge (graph)(monoid)

\end{tikzpicture}


\end{center}

In turn this shows that if the graded monoids of two Leavitt path algebras are isomorphic, then the algebraic properties of one algebra induces the same properties in the other algebra via the graded monoid $M^{\gr}$ as a bridge.

Any monoid is equipped with a pre-ordering; $a \leq b$ if $b=a+c$.  We will see that the pre-order structure of $\Mn$ of the graph $E$ determines the graded structure of the Leavitt path algebra $L_F(E)$, whereas the action of the group $\mathbb Z$ gives information about the non-graded structure.  
  We show that the properties of a graph $E$  having cycles with/without exits, can be translated as properties of the orbits of the action of the group $\mathbb Z$ on $\Mn$.  Specifically, we prove that a  graph $E$ has a cycle without an exit if and only if there is an element $a\in \Mn$  such that ${}^na  = a$,  for some $n\in \mathbb Z$  (Proposition~\ref{goldenprop}). Consequently the graph has condition (L) if and only if $\mathbb Z$ acts freely on $\Mn$ (Corollary~\ref{conLm}).
  We go further to show that the (non-graded) algebraic structure of $L_F(E)$ (such as simplicity or purely simplicity) can be described completely by the orbits of $\mathbb Z$-action on $\Mn$ (Corollary~\ref{conKm}) . We conclude the paper by proving that a $\mathbb Z$-module isomorphism between the monoids preserves important structures of corresponding Leavitt path algebras (Theorem~\ref{mainthemethe}).

The fact that the algebraic structure of the Leavitt path algebra $L_F(E)$ can be read via the monoid $\Mn$ should not come as a surprise if $\Mn$ were to be a complete invariant for these algebras.  In fact it was conjectured in \cite[Conjecture~1]{roozbehhazrat2013}, that  the graded Grothendieck group $K_0^{\gr}$ along with its ordering and its module structure is a complete invariant for the class of (finite) Leavitt path algebras (see also~\cite{arapardo}, \cite[\S~7.3.4]{AAS}).   
  
\begin{conjecture}\label{conj1}
Let $E_1$ and $E_2$ be finite graphs and $F$ a field. Then the following are equivalent.

\begin{enumerate}[\upshape(1)]

\item There is a $\mathbb Z$-module isomorphism $\phi: M^{\gr}_{E_1} \rightarrow  M^{\gr}_{E_2}$, such that $\phi\big (\sum_{v\in E_1^0} v\big)=\sum_{v\in E_2^0} v$;

\smallskip

\item There is an order-preserving $\Z[x,x^{-1}]$-module isomorphism 
\begin{align*}
K_0^{\gr}(L_F(E_1)) &\longrightarrow K_0^{\gr}(L_F(E_2)),\\
 [L_F(E_1)] &\longmapsto [L_F(E_2)]. 
 \end{align*}

\medskip 

\item There is a graded ring isomorphism $\varphi:L_F(E_1) \rightarrow L_F(E_2)$.
\end{enumerate}
\end{conjecture}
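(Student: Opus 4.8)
The plan is to dispatch the two formal equivalences $(1)\Leftrightarrow(2)$ and $(3)\Rightarrow(1)$ with the machinery already assembled, and then to isolate $(1)\Rightarrow(3)$ as the single real theorem to be proved. Since this last implication is (the finite case of) the conjecture of \cite{roozbehhazrat2013}, I do not expect to close it here; I can only indicate the natural line of attack and point to the obstruction.

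\emph{The soft directions.} For $(3)\Rightarrow(1)$: a graded ring isomorphism $\varphi\colon L_F(E_1)\to L_F(E_2)$ carries graded finitely generated projective modules to graded finitely generated projective modules, commutes with the shift, and preserves direct sums, hence induces a $\Z$-monoid isomorphism $\mathcal V^{\gr}(L_F(E_1))\to\mathcal V^{\gr}(L_F(E_2))$. Under the identification $\Mn=\mathcal V^{\gr}(L_F(E))$ this is a $\Z$-module isomorphism $\phi\colon M^{\gr}_{E_1}\to M^{\gr}_{E_2}$, and since for a finite graph $\sum_{v\in E^0}v$ is the class $[L_F(E)]$ of the free rank-one module and $\varphi([L_F(E_1)])=[L_F(E_2)]$, we get $\phi\big(\sum v\big)=\sum v$. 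For $(1)\Leftrightarrow(2)$: group completion is functorial on monoid morphisms and turns the $\Z$-action into multiplication by $x$, so $(1)$ yields an order-preserving $\Z[x,x^{-1}]$-module isomorphism of the $K_0^{\gr}$'s sending $[L_F(E_1)]$ to $[L_F(E_2)]$; conversely, since the talented monoid is cancellative it is recovered from $K_0^{\gr}$ as its positive cone, so an order-preserving $\Z[x,x^{-1}]$-module isomorphism restricts to a $\Z$-module isomorphism of the monoids with the order units matching. This step uses only $\Mn=\mathcal V^{\gr}(L_F(E))$ and standard properties of group completion.

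\emph{The hard direction, and the obstacle.} For $(1)\Rightarrow(3)$ the strategy I would pursue is the one imported from symbolic dynamics: assemble a finite list of elementary graph moves — in-splitting, out-splitting, source elimination, shift/collapse moves — check that each alters $L_F(E)$ only up to graded isomorphism (with the unit $\sum v$ tracked, which is what pins down isomorphism rather than mere graded Morita equivalence) and alters $\Mn$ only up to a $\Z$-module isomorphism fixing $\sum v$, and then prove that every $\Z$-module isomorphism $M^{\gr}_{E_1}\to M^{\gr}_{E_2}$ with $\phi(\sum v)=\sum v$ is realized by a finite composition of such moves. Equivalently, one could try to ``desuspend'' the $\Z$-grading, passing from $\Mn$ with its $\Z$-action to an ultramatricial/AF-type picture, and invoke a classification theorem for ordered $\Z[x,x^{-1}]$-modules there. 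The decisive step — precisely where the present circle of ideas runs out — is exactly this: showing that an \emph{abstract} isomorphism of talented monoids is \emph{geometric}, i.e.\ induced by a chain of graph moves. That is the content of Conjecture~\ref{conj1} itself; what the remainder of the paper contributes instead is supporting evidence, namely that simplicity, purely infinite simplicity, the ideal lattice and Condition~(L) of $L_F(E)$ are all determined by $\Mn$ together with its $\Z$-action, so that any hypothetical failure of $(1)\Rightarrow(3)$ would have to be invisible to every one of these invariants at once.
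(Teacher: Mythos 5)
Your treatment matches the paper exactly as far as the paper goes: the statement is a \emph{conjecture}, and the only argument the paper supplies is the remark that $(1)\Leftrightarrow(2)$ is immediate because $K_0^{\gr}(L_F(E))$ is the group completion of the cancellative monoid $\Mn$ --- which is precisely your ``soft'' step, and your $(3)\Rightarrow(1)$ via functoriality of $\mathcal V^{\gr}$ and $[L_F(E)]=\sum_{v\in E^0}v$ is the equally standard direction the paper takes for granted. You correctly identify $(1)\Rightarrow(3)$ as the actual open content (Conjecture~1 of \cite{roozbehhazrat2013}), which neither you nor the paper proves, so there is no gap in your proposal relative to the paper; your sketch of a graph-moves strategy and the observation that the paper's later results (Corollaries~\ref{conLm}, \ref{conKm}, Theorem~\ref{mainthemethe}) serve as supporting evidence rather than a proof is an accurate description of the paper's own stance.
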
  

Note that since $K^{\gr}_0(L_F(E))$ is the group completion of $\Mn$, the directions 1 $\Leftrightarrow$ 2 are immediate.


\section{Graph monoids}
 
 In this section we briefly introduce the notions of the directed graph and the monoid $M_E$ associated to it. We then introduce the ``graded'' version of this monoid $\Mn$. 
 We refer the reader to the recent monograph \cite{AAS} for the theory of Leavitt path algebras and a comprehensive study of the monoid $M_E$. The monoid $\Mn$ was first considered in \cite{roozbehhazrat2013} (see also \cite[\S3.9.2]{hazi}) as the positive cone of the graded Grothendieck group $K^{\gr}_0(L_F(E))$ and further studied in~\cite{haz3,ahls}.

\subsection{Graphs}\label{graphsec}

A directed graph $E$ is a tuple $(E^{0}, E^{1}, r, s)$, where $E^{0}$ and $E^{1}$ are
sets and $r,s$ are maps from $E^1$ to $E^0$. A graph $E$ is finite if $E^0$ and $E^1$ are both finite. We think of each $e \in E^1$ as an edge 
pointing from $s(e)$ to $r(e)$. A vertex $v\in E^0$ is a sink if $s^{-1}(v)=\emptyset$. We use the convention that a (finite) path $p$ in $E$ is
a sequence $p=\a_{1}\a_{2}\cdots \a_{n}$ of edges $\a_{i}$ in $E$ such that
$r(\a_{i})=s(\a_{i+1})$ for $1\leq i\leq n-1$. We define $s(p) = s(\a_{1})$, and $r(p) =
r(\a_{n})$. 

 If there is a path from a vertex $u$ to a vertex $v$, we write $u\ge v$. A subset $M$ of $E^0$ is \emph{downward directed}  if for any two $u,v\in M$ there exists $w\in M$ such that $u\geq w$ and $v\geq w$ (\cite[\S4.2]{AAS}, \cite[\S2]{rangaswamy}).
We will use the following two notations throughout: for $v\in E^0$,
\[T(v):=\{w\in E^0 \mid v\geq w \} \,   \text{ and } \, M(v):=\{w\in E^0 \mid w \geq v \}.\]

A graph $E$ is said to be \emph{row-finite} if for each vertex $u\in E^{0}$,
there are at most finitely many edges in $s^{-1}(u)$. A vertex $u$ for which $s^{-1}(u)$
is empty is called a \emph{sink}, whereas $u\in E^{0}$ is called an \emph{infinite
emitter} if $s^{-1}(u)$ is infinite. If $u\in E^{0}$ is neither a sink nor an infinite
emitter, then it is called a \emph{regular vertex}. 

Following now the standard notations (see \cite[\S 2.9]{AAS}), we denote by $E^\infty$ the set of all infinite paths and by $E^{\leq \infty}$ the set $E^\infty$ together with the set of finite paths in $E$ whose range vertex is a singular vertex. For a vertex $v\in E^0$, we denote by $vE^{\leq \infty}$ the paths in $E^{\leq \infty}$ that starts from the vertex $v$. 

We define the ``local'' version of cofinality which we will use in \S\ref{seciv}. We say a vertex $v\in E^0$ is \emph{cofinal with respect} to $w\in E^0$ if for every $\alpha \in wE^{\leq \infty}$, there is a path from $v$ which connects to a vertex in $\alpha$.   We say a vertex $v$ is cofinal, if it is cofinal with respect to any other vertex. Finally, we say the graph $E$ is \emph{cofinal} if every vertex is cofinal with respect to any other vertex. The concept of cofinal graph was originally used to give a criteria for the simplicity of graph algebras (see \cite[Theorem~2.9.7]{AAS} and \cite[\S5.6]{AAS}). 

We say a vertex $v\in E^0$ has \emph{no bifurcation} if for any $u\in T(v)$, $| s^{-1}(u)| \leq 1$. We say $v$ is a \emph{line-point} if $v$ has no bifurcation and does not end at a cycle. Note that by our definition, a sink is a line-point (see \cite[\S 2.6]{AAS}). 

Throughout the note the graphs we consider are row-finite graphs. The reason is twofold. The proofs are rather more transparent in this case and the conjecture that the graded Grothendieck group for Leavitt path algebras is a complete invariant (Conjecture~\ref{conj1}) is originally formulated for finite graphs. We think one could generalise the results of the paper to arbitrary graphs and possibly other Leavitt path-like algebras, such as weighted Leavitt path algebras.

Throughout the paper we will heavily use the concept of the covering of a graph. The \emph{covering graph}  $\overline E$  of $E$ (also denoted by  $E\times_1 \mathbb Z$) is given by
\begin{gather*}
    \overline E^0 = \big\{v_n \mid v \in E^0 \text{ and } n \in \Z \big\},\qquad
    \overline E^1 = \big\{e_n \mid e\in E^1 \text{ and } n\in \Z \big\},\\
    s(e_n) = s(e)_n,\qquad\text{ and } \qquad  r(e_n) = r(e)_{n+1}.
\end{gather*}

As examples, consider the following graphs
\begin{equation*}
{\def\labelstyle{\displaystyle}
E : \quad \,\, \xymatrix{
 u \ar@(lu,ld)_e\ar@/^0.9pc/[r]^f & v \ar@/^0.9pc/[l]^g
 }} \qquad \quad
{\def\labelstyle{\displaystyle}
F: \quad \,\, \xymatrix{
   u \ar@(ur,rd)^e  \ar@(u,r)^f
}}
\end{equation*}
Then
\begin{equation}\label{level421}
\xymatrix@=15pt{
& \text{\bf Level -1} && \text{\bf Level 0} && \text{\bf Level 1} && \text{\bf Level 2}\\ 
\overline E: & \dots  {u_{-1}} \ar[rr]^-{e_{-1}} \ar[drr]^(0.4){f_{-1}} &&  {u_{0}} \ar[rr]^-{e_0} \ar[drr]^(0.4){f_0} && {u_{1}}  \ar[rr]^-{e_{1}} \ar[drr]^(0.4){f_{1}} && \cdots\\
& \dots {v_{-1}}   \ar[urr]_(0.3){g_{-1}} && {v_{0}} \ar[urr]_(0.3){g_0}  && {v_{1}}  \ar[urr]_(0.3){g_{1}}&& \cdots
}
\end{equation}
and
\begin{equation}\label{level422}
\xymatrix@=15pt{
\overline{F}: \quad \,\,&\dots  {u_{-1}} \ar@/^0.9pc/[rr]^{f_{-1}} \ar@/_0.9pc/[rr]_{e_{-1}}  &&  {u_{0}} \ar@/^0.9pc/[rr]^{f_0} \ar@/_0.9pc/[rr]_{e_0} && {u_{1}}  \ar@/^0.9pc/[rr]^{f_{1}}  \ar@/_0.9pc/[rr]_{e_{1}} && \quad \cdots
}
\end{equation}

Notice that for any graph $E$, the covering graph $\overline E$ is an acyclic \emph{stationary} graph, meaning, the graph repeats the pattern going from ``level'' $n$ to ``level'' $n+1$. This fact will be used throughout the article.

Recall that a subset $H \subseteq E^0$ is said to be \emph{hereditary} if
for any $e \in E^1$ we have that $s(e)\in H$ implies $r(e)\in H$. A hereditary subset $H
\subseteq E^0$ is called \emph{saturated} if whenever $0 < |s^{-1}(v)| < \infty$, then $\{r(e)\mid
e\in E^1 \text{~and~} s(e)=v\}\subseteq H$ implies $v\in H$. Throughout the paper we work with hereditary saturated subsets of $E^0$.

For hereditary saturated subsets $H_1$ and $H_2$ of $E$ with  $H_1 \subseteq  H_2$, define the quotient graph $H_2 / H_1 $ as a graph such that 
$(H_2/ H_1)^0=H_2\setminus H_1$ and $(H_2/H_1)^1=\{e\in E^1\;|\; s(e)\in H_2, r(e)\notin H_1\}$. The source and range maps of $H_2/H_1$ are restricted from the graph $E$. If $H_2=E^0$, then $H_2/H_1$ is the \emph{quotient graph} $E/H_1$ (\cite[Definition~2.4.11]{AAS}).

\subsection{Graph Monoids} \label{monsec}
Let $M$ be an abelian monoid with a group $\Gamma$ acting on it.  For $\alpha \in \Gamma$ and $a\in M$, we denote the action of $\alpha$ on $a$ by ${}^\alpha a$. 
A monoid homomorphism $\phi:M_1 \rightarrow M_2$ is called $\Gamma$-\emph{module homomorphism} if $\phi$ respects the action of $\Gamma$, i.e., $
\phi({}^\alpha a)={}^\alpha \phi(a)$. We define a  pre-ordering on the monoid $M$ by $a\leq b$ if $b=a+c$, for some $c\in M$. 
Throughout we write $a \parallel b$ if the elements $a$ and $b$ are not comparable.  An element $a\in M$ is called an \emph{atom} if $a=b+c$ then $b=0$ or $c=0$. An element $a\in M$ is called \emph{minimal} if $b\leq a$ then $a\leq b$. When $M$ is cofinal and cancellative, these notions coincide with the more intuitive definition of minimality, i.e., $a$ is minimal if $0\not = b\leq a$ then $a=b$. The monoid of interest in the paper, $\Mn$, is cofinal and cancellative and thus all these concepts coincide. 

Throughout we assume that the group $\Gamma$ is abelian. Indeed in our setting of graph algebras, this group is the group of integers $\mathbb Z$. We use the following terminologies throughout this paper: We call an element $a\in M$ \emph{periodic} if there is an $\alpha \in \Gamma$ such that ${}^\alpha a =a$. If $a\in M$ is not periodic, we call it \emph{aperiodic}. We denote the orbit of the action of $\Gamma$ on an element $a$ by $O(a)$, so $O(a)=\{{}^\alpha a \mid \alpha \in \Gamma \}$.

 A $\Gamma$-\emph{order-ideal} of a monoid $M$ is a  subset $I$ of $M$ such that for any $\alpha,\beta \in \Gamma$, ${}^\alpha a+{}^\beta b \in I$ if and only if 
$a,b \in I$. Equivalently, a $\Gamma$-order-ideal is a submonoid $I$ of $M$ which is closed under the action of $\Gamma$ and it  is
\emph{hereditary} in the sense that $a \le b$ and $b \in I$ imply $a \in I$. The set $\LL(M)$ of $\Gamma$-order-ideals of $M$ forms a (complete) lattice. We say $M$ is \emph{simple} if the only $\Gamma$-order-ideals of $M$ are $0$ and $M$.

For a ring $A$ with unit, the isomorphism classes of finitely generated projective (left/right) $A$-modules with direct sum as an operation form a monoid denoted by $\mathcal V(A)$. 
This construction can be extended to non-unital rings via idempotents. For a $\Gamma$-graded ring $A$, considering the graded finitely generated projective modules, it provides us with the monoid  $\mathcal V^{\gr}(A)$ which has an action of $\Gamma$ on it via the shift operation on modules (see \cite[\S 3]{hazi} for the general theory).

In this article we consider these monoids when the algebra is a Leavitt path algebra. 
Ara, Moreno and Pardo \cite{ara2006} showed that for a Leavitt path algebra associated to a row-finite
graph $E$, the monoid $\mathcal{V}(L_{F}(E))$ is entirely determined by elementary
graph-theoretic data.   Specifically, for a row-finite graph $E$, we define $M_E$ to be the
abelian monoid generated by $E^{0}$ subject to
\begin{equation}\label{monoidrelation}
v=\sum_{e\in s^{-1}(v)}r(e), 
\end{equation}
for every $v\in E^{0}$ that is not a sink. Theorem~3.5 of~\cite{ara2006} relates this monoid to the theory of Leavitt path algebras:
There is a monoid isomorphism  $\mathcal{V}(L_{F}(E)) \cong M_E$.

There is an explicit description of the congruence on the free abelian
monoid given by the defining relations of $M_{E}$ \cite[\S 4]{ara2006}. Let $F_E$ be the free abelian monoid on
the set $E^{0}$. The nonzero elements of $F$ can be written in a unique form up to
permutation as $\sum_{i=1}^{n}v_{i}$, where $v_{i}\in E^{0}$. Define a binary relation
$\xra_{1}$ on $F\setminus\{0\}$ by 
\begin{equation}\label{hfgtrgt655}
\sum_{i=1}^{n}v_{i}\longrightarrow_{1}\sum_{i\neq
j}v_{i}+\sum_{e\in s^{-1}(v_{j})}r(e),
\end{equation}
whenever $j\in \{1, \cdots, n\}$ and 
$v_{j}$ is not a sink. Let $\xra$ be the transitive and reflexive closure of $\xra_{1}$
on $F\setminus\{0\}$ and $\sim$ the congruence on $F$ generated by the relation $\xra$.
Then $M_{E}=F/\sim$.

The following two part lemma is crucial for our work and we frequently use it throughout the article. For the proof see \cite{ara2006} and \cite[\S 3.6]{AAS}

\begin{lem}\label{aralem6}
Let $E$ be a row-finite graph, $F_E$ the free abelian monoid generated by $E^0$ and $M_E$ the graph monoid of $E$. 

\begin{enumerate}[\upshape(i)]

\item If   $a=a_1+a_2$ and $a\rightarrow b$, where $a, a_1, a_2, b \in F_E \backslash \{ 0 \}$,  then $b$ can be written as $b=b_1+b_2$ with $a_1\rightarrow b_1$  and $a_2\rightarrow b_2$.

\medskip

\item (The Confluence Lemma) For $a, b \in F_E \backslash \{ 0 \}$, we have   $a=b$ in $M_E$ if and only if there is $c \in F_E \backslash  \{0 \}$ such that 
$a \rightarrow c$ and $b\rightarrow c$. 

\end{enumerate}
\end{lem}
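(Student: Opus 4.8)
The plan is to prove both parts directly from the combinatorial description of the rewriting relation $\to_1$ on the free abelian monoid $F_E$, the key point being that although $\to_1$ is \emph{not} terminating (a cycle in $E$ produces infinite rewrite sequences, e.g. $v\to_1 v$), it is \emph{locally confluent}, and this is enough to deduce global confluence of $\to$ by a ``strip lemma'' induction rather than by Newman's lemma.

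For (i), I would first treat a single step. If $a\to_1 b$, write $a=\sum_{i=1}^n v_i$ and $b=\sum_{i\neq j}v_i+\sum_{e\in s^{-1}(v_j)}r(e)$ for a non-sink $v_j$. A decomposition $a=a_1+a_2$ is just a partition of the multiset $\{v_i\}$, and the chosen occurrence of $v_j$ lies in $a_1$ or in $a_2$; say in $a_1$. Put $b_1:=\big(a_1-v_j\big)+\sum_{e\in s^{-1}(v_j)}r(e)$ (multiset difference) and $b_2:=a_2$. Then $a_1\to_1 b_1$, $a_2\to b_2$ reflexively, $b=b_1+b_2$, and both $b_1,b_2$ are nonzero because $v_j$ is a non-sink in a row-finite graph, so $\sum_{e\in s^{-1}(v_j)}r(e)\neq 0$. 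The general statement follows by induction on the length of the chain $a\to b$: apply the single-step case to the first arrow and the inductive hypothesis to the remainder, noting that composites of $\to$ remain inside $F_E\setminus\{0\}$.

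For (ii), the direction ``$\Leftarrow$'' is immediate, since $\to$ is contained in equality in $M_E$, whence $a=c=b$ there. For ``$\Rightarrow$'', observe first that $\to$ is reflexive, transitive and additive ($a\to_1 b$ implies $a+c\to_1 b+c$), so the congruence $\sim$ it generates coincides with the equivalence relation it generates; thus $a=b$ in $M_E$ yields a zig-zag $a=x_0\leftrightarrow x_1\leftrightarrow\cdots\leftrightarrow x_n=b$ with each $\leftrightarrow$ an instance of $\to$ or of $\leftarrow$. The heart of the proof is that $\to$ is \emph{confluent}: if $a\to b$ and $a\to c$, then $b\to d$ and $c\to d$ for some $d$. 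I would establish this in three stages. \emph{Local confluence of $\to_1$:} if $a\to_1 b$ and $a\to_1 c$ rewrite the same occurrence then $b=c$; if they rewrite different occurrences the two rewrites commute, producing a common one-step descendant. \emph{Strip lemma:} if $a\to_1 b$ and $a\to c$, then $b$ and $c$ have a common descendant, proved by induction on the length of $a\to c$, using local confluence at the first step and the inductive hypothesis afterwards. \emph{Full confluence:} if $a\to b$ and $a\to c$, then $b$ and $c$ have a common descendant, proved by induction on the length of $a\to b$, invoking the strip lemma at the first step. Finally, with confluence available, an induction on the length $n$ of the zig-zag produces a common descendant $c$ of $a$ and $b$: given a common descendant $c'$ of $a$ and $x_{n-1}$, merge it with $x_n=b$ using confluence when $x_{n-1}\to x_n$, and note that when $x_n\to x_{n-1}$ one already has $b\to x_{n-1}\to c'$, so $c'$ itself works.

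The main obstacle is precisely the non-termination of $\to_1$, which rules out a direct appeal to Newman's lemma; the passage from local confluence of $\to_1$ to confluence of $\to$ must be done by hand via the strip-lemma induction above. The only other point needing care is bookkeeping: one must check that every element produced along these chains lies in $F_E\setminus\{0\}$, which holds because rewriting a non-sink vertex of a row-finite graph never annihilates a nonzero element.
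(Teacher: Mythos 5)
Your proof is correct. Note that the paper itself does not prove this lemma; it simply refers to \cite{ara2006} and \cite[\S 3.6]{AAS}, and your argument is essentially a reconstruction of the proof given there: part (i) by settling the single-step case (where the rewritten occurrence falls into $a_1$ or $a_2$) and inducting along the chain, and part (ii) by reducing $a=b$ in $M_E$ to a zig-zag of $\rightarrow$-steps and closing it up using confluence of $\rightarrow$. The one point worth emphasizing is that you correctly identify the property that makes the argument work despite non-termination: a one-step divergence $b \leftarrow_1 a \rightarrow_1 c$ closes with \emph{at most one} further step on each side (either $b=c$, or the two rewrites act on distinct occurrences and commute), i.e.\ subcommutativity rather than mere local confluence. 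That strong form is exactly what makes your strip-lemma induction on the length of $a\rightarrow c$ go through, whereas ordinary local confluence would not suffice without termination; the published proof uses the same observation, phrased as an induction along the chain connecting $a$ and $b$. Your bookkeeping remarks (additivity of $\rightarrow_1$, so the congruence $\sim$ coincides with the equivalence relation generated by $\rightarrow$, and the fact that rewriting a non-sink in a row-finite graph never produces $0$) are the right ones and are handled correctly.
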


For a row-finite graph $E$, we define the ``graded'' version of the monoid $M_E$, and denote it by $\Mn$, to be the
abelian monoid generated by $\{v(i) \mid v\in E^0, i\in \mathbb Z\}$ subject to
\begin{equation}\label{monoidrelation2}
v(i)=\sum_{e\in s^{-1}(v)}r(e)(i+1), 
\end{equation}
for every $v\in E^{0}$ that is not a sink and $i \in \mathbb Z$. The monoid $\Mn$ is equipped by a natural $\mathbb Z$-action: $${}^n v(k)=v(k+n)$$ for $n,k \in \mathbb Z$. Proposition~5.7 of \cite{ahls} relates this monoid to the theory of Leavitt path algebras: there is a  $\mathbb Z$-module isomorphism 
$\Mn \cong \mathcal{V}^{\gr}(L_{F}(E))$.  In fact we have 
\begin{align}\label{yhoperagen}
\Mn &\cong\mathcal V(L_F(\overline E))\cong\, \mathcal V^{\gr}(L_F(E)),
 \notag
\end{align} (see also Remark \ref{rmk}). Thus the monoid $\Mn$  is cofinal and cancellative (\cite[\S5]{ahls}). We will use these facts throughout this paper.

Throughout the article, we simultaneously use $v\in E^0$ as a vertex of $E$, as an element of $L_F(E)$ and the element $v=v(0)$ in $\Mn$, as the meaning will be clear from the context. For a subset $H\subseteq E^0$, the ideal it generates in $L_F(E)$ is denoted by $I(H)$, whereas the $\mathbb Z$-order-ideal it generates in $\Mn$ is denoted by $\langle H \rangle $.

Let $I$ be submonoid of the monoid $M$. Define an equivalence relation $\sim_{I}$ on $M$ as follows: For $a, b\in M$,  $a\sim_{I} b$ if there exist $i,j\in I$ such that  $a+i=b+j$ in $M$. The quotient monoid $M/I$ is defined as $M/\sim$. Observe that $a\sim_{I} 0$ in $M$ for any $a\in I$. If $I$ is an order-ideal then $a\sim_I 0$ if and only if $a\in I$. 

There is a natural relationship between the quotient monoids of $\Mn$ and quotient graphs of $E$ as the following lemma shows.

\begin{lem} \label{qmiso} Let $E$ be a row-finite graph. Suppose that $H_1\subseteq H_2$ with $H_1$ and $H_2$ two hereditary saturated subsets of $E^0$.  We have a $\mathbb Z$-module isomorphism of monoids 
\[M_{H_2/H_1}^{\gr}\cong M_{H_2}^{\gr}/M_{H_1}^{\gr}.\]
In particular, for a hereditary saturated subset $H$ of $E^0$, and the order-ideal $I=\langle H \rangle \subseteq \Mn$, we have a $\mathbb Z$-module isomorphism 
\[M_{E/H}^{\gr}\cong M_{E}^{\gr}/I.\]
\end{lem}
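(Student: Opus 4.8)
The plan is to build the isomorphism at the level of free abelian monoids and then check that it descends to the quotients on both sides, using the Confluence Lemma (Lemma~\ref{aralem6}) to control the congruences. First I would set up the natural map. Let $F_{H_2/H_1}^{\gr}$ and $F_{H_2}^{\gr}$ be the free abelian monoids on $\{w(i) \mid w\in H_2\setminus H_1,\ i\in\mathbb Z\}$ and $\{w(i)\mid w\in H_2,\ i\in\mathbb Z\}$ respectively. The inclusion $H_2\setminus H_1\hookrightarrow H_2$ induces a $\mathbb Z$-module homomorphism $F_{H_2/H_1}^{\gr}\to F_{H_2}^{\gr}\to M_{H_2}^{\gr}\to M_{H_2}^{\gr}/M_{H_1}^{\gr}$; I would show this factors through $M_{H_2/H_1}^{\gr}$, i.e. that the defining relations \eqref{monoidrelation2} of $M_{H_2/H_1}^{\gr}$ are respected. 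Given $v\in H_2\setminus H_1$ not a sink in $H_2/H_1$, its relation reads $v(i)=\sum_{e\in s^{-1}(v),\,r(e)\notin H_1} r(e)(i+1)$; in $M_{H_2}^{\gr}$ the full relation $v(i)=\sum_{e\in s^{-1}(v)} r(e)(i+1)$ holds, and the extra summands $r(e)(i+1)$ with $r(e)\in H_1$ lie in $M_{H_1}^{\gr}$, hence vanish in the quotient. (The one subtlety is a vertex $v\in H_2\setminus H_1$ that is a sink in $H_2/H_1$ but not in $H_2$ — then all of $s^{-1}(v)$ ranges into $H_1$, so $v(i)\in M_{H_1}^{\gr}$, which is consistent since such $v$ then equals $0$ in the quotient and imposes no relation on the $H_2/H_1$ side either; I would note this is exactly where saturation of $H_1$ is used, to keep $H_2/H_1$ a well-defined graph.) This gives a well-defined $\mathbb Z$-module homomorphism $\Phi: M_{H_2/H_1}^{\gr}\to M_{H_2}^{\gr}/M_{H_1}^{\gr}$.

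Next I would construct the inverse. Map the generators $w(i)$ of $M_{H_2}^{\gr}$ to $w(i)$ if $w\in H_2\setminus H_1$ and to $0$ if $w\in H_1$; since $H_1$ is hereditary this is compatible with the relations \eqref{monoidrelation2} in $M_{H_2}^{\gr}$ (if $s(e)\in H_1$ then $r(e)\in H_1$, so both sides map to $0$; if $s(e)\notin H_1$, the right-hand side loses exactly the $H_1$-summands), so it induces a $\mathbb Z$-module homomorphism $M_{H_2}^{\gr}\to M_{H_2/H_1}^{\gr}$ sending all of $M_{H_1}^{\gr}$ to $0$, hence a map $\Psi: M_{H_2}^{\gr}/M_{H_1}^{\gr}\to M_{H_2/H_1}^{\gr}$. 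On generators $\Psi\circ\Phi=\id$ is immediate. The content is $\Phi\circ\Psi=\id$, equivalently: if $a\in F_{H_2}^{\gr}$ is supported on $H_2\setminus H_1$ and $a\sim_{M_{H_1}^{\gr}} b$ with $b$ also supported on $H_2\setminus H_1$, then $a=b$ already in $M_{H_2/H_1}^{\gr}$. This is where I expect the main work. By definition of $\sim_I$ there are $i,j\in M_{H_1}^{\gr}$ with $a+i=b+j$ in $M_{H_2}^{\gr}$; by the Confluence Lemma there is $c\in F_{H_2}^{\gr}$ with $a+i\to c$ and $b+j\to c$. Using Lemma~\ref{aralem6}(i) repeatedly, I can split the reduction $a+i\to c$ as $a\to c_1$, $i\to c_2$ with $c=c_1+c_2$, and similarly $b\to d_1$, $j\to d_2$ with $c=d_1+d_2$. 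Since $i,j\in M_{H_1}^{\gr}$ and $H_1$ is hereditary, $c_2$ and $d_2$ are supported on $H_1$; since $a,b$ are supported on the hereditary-quotient vertices, every reduction step applied inside $a\to c_1$ either stays among $H_2\setminus H_1$ vertices or produces $H_1$-vertices that can be collected separately — here I would use part (i) once more to peel $c_1 = c_1' + c_1''$ with $c_1'$ supported on $H_2\setminus H_1$ and $c_1''$ on $H_1$, and the analogous decomposition $d_1 = d_1' + d_1''$. Comparing supports in $c = c_1' + c_1'' + c_2 = d_1' + d_1'' + d_2$ and using that $F_{H_2}^{\gr}$ is free (unique expression up to permutation) forces $c_1' = d_1'$. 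Finally, $a\to c_1'$ and $b\to d_1' = c_1'$ are reductions taking place entirely in the graph $H_2/H_1$ (the discarded $H_1$-parts correspond exactly to edges removed when forming the quotient graph), so by the Confluence Lemma $a=b$ in $M_{H_2/H_1}^{\gr}$, as desired.

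The ``in particular'' clause is the case $H_2=E^0$, $H_1=H$: then $M_{H_2}^{\gr}=\Mn$, and one only needs to identify $M_{H}^{\gr}$ inside $\Mn$ with the order-ideal $I=\langle H\rangle$. I would verify this by noting that $\langle H\rangle$ is the $\mathbb Z$-order-ideal generated by $\{v(0)\mid v\in H\}$, hence contains all $v(i)$ for $v\in H$, $i\in\mathbb Z$, and conversely every element of $\langle H\rangle$ is $\le$ a finite sum of such $v(i)$; then the relations \eqref{monoidrelation2} restricted to $H$ (using that $H$ is hereditary, so $r(e)\in H$ whenever $s(e)\in H$) show the submonoid they generate is exactly $M_H^{\gr}$, and saturation of $H$ gives that this submonoid is already hereditary in $\Mn$, so equals $I$. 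With $M_H^{\gr}=I$ the general statement specializes to $M_{E/H}^{\gr}\cong \Mn/I$.

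I expect the genuine obstacle to be the support-tracking argument in the second paragraph: making precise that a reduction sequence starting from something supported on $H_2\setminus H_1$ can be reorganized, via Lemma~\ref{aralem6}(i), into a reduction that lives in the quotient graph plus a ``garbage'' part supported in $H_1$. Everything else — well-definedness of $\Phi$ and $\Psi$, the $\mathbb Z$-equivariance (which is automatic since the action just shifts the index $i$ and all maps are defined on generators by index-preserving formulas), and the identification $M_H^{\gr}=\langle H\rangle$ — is routine once hereditary-ness and saturation are invoked in the right places.
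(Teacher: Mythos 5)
Your argument is correct and is essentially the route the paper intends: its own ``proof'' simply defers to the non-graded analogue in Abrams--Ara--Siles Molina and says one can argue directly, which is exactly what you do by constructing the two maps on generators, using heredity and saturation of $H_1$ for well-definedness, and invoking the Confluence Lemma. One simplification worth noting: once $\Psi$ is known to be a well-defined homomorphism killing $M_{H_1}^{\gr}$, both composites $\Psi\circ\Phi$ and $\Phi\circ\Psi$ fix the classes of the generators $w(i)$ and hence are identities, so the support-tracking confluence argument you single out as the main obstacle is not actually needed (it amounts to reproving injectivity of $\Phi$, which follows formally).
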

\begin{proof}
One can establish this directly and it is similar to the non-graded version which has already been established in the literature (see \cite[Ptoposition~3.6.18]{AAS}). 
\end{proof}

\begin{example}
Consider the graph $E$, 
\[
\xymatrix{
E : &o \ar@/^0.9pc/[r]^{\alpha} &  u  \ar@/^0.9pc/[l]^{\beta}  \ar@/^1.9pc/[rrr]^{\nu} \ar@/_0.9pc/[r]_{\gamma} & v   \ar@/^0.9pc/[rr]^{\mu} && x  \ar@/^0.9pc/[ll]^{\delta}}
\]
We do some calculation in the monoid $\Mn$ which prepares us for the theorems in the next section. 

Using the relations~\ref{monoidrelation2} in $\Mn$,  
$o=u(1)=o(2)+v(2)+x(2)$. Thus we have 
\[{}^{-2}  o >o.\] 
This follows because the cycle $\alpha\beta$ in $E$ has an exit. In fact we show in Proposition~\ref{goldenprop} that if there is an $a\in \Mn$ such that ${}^n a > a$ with $n$ a negative integer then there is a cycle with an exit in the graph. On the other hand, we have $v=x(1)=v(2)$, i.e.,  ${}^2  v =v.$ This is because the cycle $\mu\delta$ has no exit. We further prove in Proposition~\ref{goldenprop} that if there is an $a\in \Mn$ with ${}^n a=a$ then there is a cycle in $E$ without an exit. 

Consider now the hereditary saturated subset $\{v,x\}$ and consider the quotient graph $E/ \{v,x\}$ which is 
\[
\xymatrix{
E/ \{v,x\} : &o \ar@/^0.9pc/[r]^{\alpha} &  u  \ar@/^0.9pc/[l]^{\beta}}
\]
Thus in $M^{\gr}_{E / \{v,x\}} \cong M_E^{\gr} / \langle u, x \rangle$ we have 
 \[{}^{-2}  o  =o.\]
This follows because the cycle $\alpha\beta$ has all its exits in the hereditary saturated subset $\{v,x\}$. By the theory of Leavitt path algebras this gives non-graded ideals in $L_F(E)$. In Proposition~\ref{propnongra} we show that if for an order-ideal $I$, the quotient $\Mn/I$ has a periodic element, then $L_F(E)$ has non-graded ideals. 
\end{example}

\begin{example}
Consider the graph, 
\[
\xymatrix@=13pt{
& & \bullet \ar@/^/[dr]      \\
E: &  \bullet \ar@/^/[ur] &&\bullet \ar@/^/[dl] &  \\
& & \bullet \ar@/^/[ul]     
 }
\]
One can directly show that 
\[\Mn \cong  \mathbb N\oplus \mathbb N \oplus \mathbb N \oplus \mathbb N,\] with
\[ {}^1 (a,b,c,d)=(b,c,d,a),\]
(see also~\cite[Proposition~3.7.1]{hazi}). Thus any element is periodic of order 4. Clearly for any vertex $v$, $v\in \Mn$ is a minimal element and the orbit of $v$, $O(v)=\{{}^i v,  \mid i \in \mathbb Z \}=E^0$. Note that $|O(v)|=4$ is also the length of the cycle.  In Theorem~\ref{mainthemethe} we will show that the set of cycles without exits in $E$ is in one-to-one correspondence with the orbits of minimal periodic elements of $\Mn$ and the length of cycles are equal to the order of the corresponding orbits. 
\end{example}

\begin{example}

The following example shows that the monoid $\Mn$ can have a very rich structure. Consider the graph 
\begin{equation*}
{\def\labelstyle{\displaystyle}
 \xymatrix{
 E: &\bullet  \ar@(lu,ld)\ar@/^0.9pc/[r] & \bullet \ar@/^0.9pc/[l] 
}} 
\end{equation*}
\smallskip 
with the adjacency matrix $A_E= \left(\begin{matrix} 1 & 1\\ 1 & 0
\end{matrix}\right)$. We know that the Leavitt path algebra $L_F(E)$ is strongly graded~\cite[Theorem~1.6.15]{hazi}. Thus by Dade's theorem (\cite[Theorem~1.5.1]{hazi}) there is an equivalence of categories $\Grr L_F(E) \cong \Modd L_F(E)_0$. Here $\Grr L_F(E)$ is the category of graded modules over $L_F(E)$  and   $\Modd L_F(E)_0$ is the category of modules over the zero-component ring $L_F(E)_0$.  This implies that $K^{\gr}_0(L_F(E)) \cong K_0(L_F(E)_0)$, with the positive cones mapping to each other. The zero-component $L_F(E)_0$ is the Fibonacci algebra and its $K_0$ and its positive cone is calculated in (see~\cite[Example IV.3.6]{davidson}) which are the direct limit of 
\[ \mathbb Z\oplus \mathbb Z
\stackrel{A_E}{\longrightarrow}  \mathbb Z\oplus \mathbb Z
\stackrel{A_E}\longrightarrow  \mathbb Z\oplus \mathbb Z
\stackrel{A_E}\longrightarrow \cdots,\]
and
\[ \mathbb N\oplus \mathbb N
\stackrel{A_E}{\longrightarrow}  \mathbb N\oplus \mathbb N
\stackrel{A_E}\longrightarrow  \mathbb N\oplus \mathbb N
\stackrel{A_E}\longrightarrow \cdots,\]
respectively. 
Since $\Mn$ is the positive cone of  $K^{\gr}_0(L_F(E))$, we have 
\[\Mn = \varinjlim_{A} \mathbb N \oplus \mathbb N= \Big \{ (m,n)\in \mathbb Z \oplus \mathbb Z  \, \, \Big \vert \, \,   \frac{1+\sqrt{5}}{2} m +n \geq 0 \Big \},\]
with the action 
\[{}^1 (m,n)=(m,n) \left(\begin{matrix} 1 & 1\\ 1 & 0
\end{matrix}\right)= (m+n,m).\]

In contrast, $M_E= \{0, u\}$, where $u=u+u$.

\end{example}

Denote by $\LL^{\gr}\big(L_F(E)\big)$ the lattice of graded ideals of $L_F(E)$. There is a lattice isomorphism between the set $\TT_E$ of  hereditary saturated subsets of $E$ and the set $\LL^{\gr}\big(L_F(E)\big)$ (\cite[Theorem~2.5.9]{AAS}). The correspondence is  
 \begin{align}\label{latticeisosecideal}
 \Phi: \TT_E&\longrightarrow \LL^{\gr}\big(L_F(E)\big),\\ 
  H &\longmapsto I(H), \notag 
 \end{align} 
 where $H$ is a hereditary saturated subset and $I(H)$ is the graded ideal generated by the set $\big \{v \;|\; v\in H \big \}.$
On the other hand there is a lattice isomorphism between the set  $\TT_E$  and the lattice of $\Gamma$-order-ideals of $\Mn$~\cite[Theorem 5.11]{ahls}. The correspondence is  
 \begin{align}\label{latticeisosecideal2}
 \Phi: \TT_E&\longrightarrow \LL\big(\Mn\big),\\ 
  H &\longmapsto \langle H \rangle, \notag 
 \end{align} 
where $ \langle H \rangle$ is the order-ideal generated by the set $\big \{v \;|\; v\in H \big \}.$ Combining these two correspondence we have a lattice isomorphism
 \begin{align}\label{latticeisosecideal3}
 \Phi: \LL\big(\Mn\big) &\longrightarrow \LL^{\gr}\big(L_F(E)\big),\\ 
   \langle H \rangle &\longmapsto I(H). \notag 
 \end{align} 
Thus the Leavitt path algebra $L_F(E)$ is a graded simple ring if and only if $\Mn$ is a simple $\mathbb Z$-monoid.

We will frequently use the following two facts: The \emph{forgetful function} 
\begin{align}\label{forgthryhr}
\Mn &\longrightarrow M_E,\\
v(i) &\longmapsto v, \notag
\end{align}
relates the graded monoid to the non-graded counterpart. In several of the proofs, we pass the equalities in $\Mn$ to $M_E$ and then use Lemma~\ref{aralem6}. The other key fact is the $\mathbb Z$-module isomorphism of monoids 
\begin{align}\label{forgthryhr2}
\Mn &\longrightarrow M_{\overline E},\\
v(i) &\longmapsto v_i, \notag
\end{align}
where $\overline E$ is the covering of the graph $E$ as defined in \S\ref{graphsec}. Again the transition from the graded monoid to $M_{\overline E}$ is a crucial step in several of our proofs, as $\overline E$ is an acyclic stationary graph which repeats in each level going from level $n$ to level $n+1$ (see examples (\ref{level421}) and (\ref{level422})).

\begin{rmk}[{\bf The Abrams-Sklar treatment of the Mad Vet}]
As Alfred North Whitehead put it: ``the paradox is now fully established that the utmost
abstractions are the true weapons with which to control our
thought of concrete fact''.
In \cite{abramssklar} Abrams and Sklar demonstrated this abstract approach beautifully by realising that a recreational puzzle, called the mad veterinarian, can be answered via assigning a graph $E$ to the problem and then solving an equation (if possible) in the monoid $M_E$. We give one instance of the puzzle from \cite{abramssklar} and show how this puzzle can naturally be modified so that $\Mn$ becomes the model of the puzzle.

Suppose a Mad Veterinarian has three machines with the following properties.
\begin{itemize}
\item Machine 1 turns one ant into one beaver;

\item Machine 2 turns one beaver into one ant, one beaver and one cougar; 

\item Machine 3 turns one cougar into one ant and one beaver.

\end{itemize}

It is also supposed that each machine can operate in reverse. The puzzle now asks, for example, whether one can start with one ant and then using the machines produce 4 ants. In order to solve the puzzle, as it was observed in \cite{abramssklar}, one can naturally assign the graph $E$ below to this problem and the question then becomes whether $a=4a$ in $M_E$.
\[E:  \xymatrix{ {} & a \ar[rd] & & {} \\
c  \ar[ru]
\ar@/^{-15pt}/ [rr]&  & b
\ar@(r,d)
 \ar[ll] \ar@/^{-10pt}/
[lu] & }\]

\bigskip 
\smallskip

We modify the puzzle as follows: If the machines create new species of age one month older and we are only allowed to feed species of the same age to the machine, then the puzzle will be modelled by the graded monoid $\Mn$ instead. 
 As an example one can start with an ant, and obtain two cougars of age 1 and 2 months respectively, because in $\Mn$ we have  
\[a=b(1)=c(2)+b(2)+a(2)=c(2)+c(1)={}^{2}c+{}^{1}c.\]
\end{rmk}

\section{Orbits of the monoid $\Mn$}

  Recall the pre-ordering one can define on a monoid from \S\ref{monsec}, i.e., $b\leq a$ if $a=b+x$.  The following lemma shows that for an $n\in  \mathbb Z_{<0}$, and $a\in\Mn$, if ${}^na$ and $a$ are comparable, then either ${}^n a=a$ or ${}^n a > a$.  Here $\mathbb Z_{<0}$ is the set of negative integers and ${}^n a$ is the result of the action of $n$ on the element $a\in \Mn$.

\begin{lem}\label{nalem} 
Let $E$ be a row-finite graph. For any $a\in \Mn$, it is not possible that ${}^n a < a$, where $n\in \mathbb Z_{<0}$.  
\end{lem}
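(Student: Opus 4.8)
The plan is to transport the statement to the covering graph $\overline E$ via the isomorphism \eqref{forgthryhr2} and exploit the fact that $\overline E$ is acyclic and stationary, so that inequalities in $M_{\overline E}$ are severely constrained by the level structure. Suppose for contradiction that ${}^n a < a$ for some $a \in \Mn$ and some $n \in \mathbb Z_{<0}$; write $n = -m$ with $m > 0$. Under the isomorphism $\Mn \cong M_{\overline E}$ the element $a$ corresponds to some $\bar a \in F_{\overline E}\setminus\{0\}$, i.e.\ a finite sum of vertices $v_i$ of $\overline E$, and the relation ${}^n a < a$ becomes $\bar a \to {}^{\,\cdot} \bar a + c$ for a suitable $c \neq 0$, where ${}^{\,\cdot}$ now denotes the level-shift by $-m$ (decreasing every index by $m$, since negative powers of the action shift indices down — one must be careful here about the sign convention from \eqref{forgthryhr2}). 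Concretely, by the Confluence Lemma (Lemma~\ref{aralem6}(ii)) there is $d \in F_{\overline E}\setminus\{0\}$ with ${}^n\bar a + c \to d$ and $\bar a \to d$; I would instead phrase everything in terms of reductions $\to$ starting from $\bar a$.

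The key observation is about \emph{levels}. Define for $x \in F_{\overline E}$ the \emph{top level} $\max(x)$ to be the largest $i$ such that some $v_i$ appears in $x$, and similarly the \emph{bottom level} $\min(x)$. A single basic move $\to_1$ in $\overline E$ replaces a vertex $v_i$ by vertices at level $i+1$ (using $r(e_i) = r(e)_{i+1}$); hence applying $\to_1$ can only \emph{increase or preserve} the top level, and it can only increase or preserve the bottom level as well, unless the bottom-level vertex being expanded was the unique one at that level — but even then the new vertices sit at a strictly higher level. So along any reduction $x \to y$ we have $\max(y) \ge \max(x)$ is false in general, but the crucial monotone quantity is: reductions never move mass to lower levels. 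Now from $\bar a \to d$ and $({}^{-m}\bar a + c) \to d$ with ${}^{-m}\bar a$ living entirely at levels $m$ lower than those of $\bar a$: comparing the top levels, $\max(d) \ge \max(\bar a)$ cannot hold simultaneously with $\max(d) \ge \max({}^{-m}\bar a + c)$ in a way consistent with $d$ being reachable from both — the point is that $d$ is reachable from $\bar a$ by moves that only push levels \emph{up}, so every level appearing in $d$ is $\ge \min(\bar a)$; but $d$ is also reachable from ${}^{-m}\bar a + c$, and the extra summand $c$ forces $d$ to strictly dominate ${}^{-m}\bar a$, eventually producing, after tracking through Lemma~\ref{aralem6}(i), an element of $\bar a$ that reduces to something strictly larger than itself at the same collection of levels — impossible in the acyclic graph $\overline E$ because $M_{\overline E} = M_{L_F(\overline E)}$ is cancellative and, $\overline E$ being acyclic and row-finite, no vertex satisfies $v \le w \le v$ nontrivially (equivalently $L_F(\overline E)$ has no infinite repetitions).

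The cleanest route, which I would actually write up, is: use cancellativity of $\Mn$ to reduce to showing $a \not< {}^{m} a$ for $m > 0$, i.e.\ (renaming) that $a < {}^m a$ is impossible; pass to $M_{\overline E}$; and argue by a \emph{weight/level} function. Assign to each vertex $v_i$ of $\overline E$ a formal "level valuation" and observe that in any relation $x = y$ in $M_{\overline E}$ obtained by confluence, the multiset of levels of $x$ and of $y$, \emph{after fully reducing both to a common $c$}, must be compatible; since a basic move sends level $i$ strictly to level $i+1$, full reduction can only be "downward-closed'' in a controlled way. If $a < {}^m a$ then reducing both sides to a common $c$ shows $c$ is reachable from $a$ and from ${}^m a + (\text{something nonzero})$; the first tells us $c$ uses only levels $\ge \min(\mathrm{supp}(a))$, while matching it against ${}^m a$ (whose support sits $m$ levels \emph{higher}) together with the surplus term forces a strict discrepancy that contradicts the equality $c = c$. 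I expect the main obstacle to be making the level-bookkeeping rigorous: a basic move can expand a low-level vertex and thereby \emph{raise} $\min$, so $\min$ is not monotone under single moves, only under the property "all produced vertices lie strictly above the consumed one''; the careful statement is that for $x \to y$, every level occurring in $y$ is $\ge$ the minimum level occurring in $x$, and strictly $>$ it if the $\min$-level vertex count drops to zero. Getting the quantifiers in this right, and combining it with Lemma~\ref{aralem6}(i) to descend to individual summands, is where the real work lies; once that is in place the contradiction is immediate.
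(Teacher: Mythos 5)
Your setup (transport to $M_{\overline E}$ via \eqref{forgthryhr2}, then apply the Confluence Lemma~\ref{aralem6}) is the same as the paper's, but the invariant you build the contradiction on — level monotonicity, ``reductions never move mass to lower levels'' — is not strong enough, and the step where you claim it ``forces a strict discrepancy that contradicts the equality $c=c$'' is exactly the gap. When the trees of the vertices in $a$ contain no sinks, both $\bar a + c$ and the shifted copy ${}^{n}\bar a$ can be expanded upward arbitrarily far, so after confluence both sides can legitimately be brought to a common reduct supported entirely on one high level $m$: the multisets of levels are perfectly compatible and no level discrepancy ever appears. Concretely, in the covering $\overline F$ of the two-loop rose (picture \eqref{level422}) one has $u_i \rightarrow 2u_{i+1}$, so $u_1$ and $u_0+x$ both reduce to elements supported entirely at any level $k$; what rules out $u_1 = u_0 + x$ is not the levels but the \emph{number of summands}: the complete expansion of $u_1$ at level $k$ has $2^{k-1}$ generators while that of $u_0+x$ has at least $2^{k}$. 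Your sketch never isolates any quantity of this kind, so even after making the level bookkeeping rigorous the contradiction would not follow.

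The paper's proof runs on precisely that counting invariant: each elementary move $\rightarrow_1$ replaces one generator by at least one generator, so the number of summands in the free monoid never decreases along $\rightarrow$; after rewriting $a$ so that all its (regular) vertices sit on a single level, taking the common reduct $c$ on a single level $m$, and using that $\overline E$ is \emph{stationary} to shift the reduction $a \rightarrow c$ down by $n$, one arrives at a reduction from ${}^{n}c + x$ (with strictly more than $q$ generators) to $c$ (with exactly $q$ generators), which is impossible. Two further points you would need to supply: (a) the stationarity of $\overline E$ is what lets you compare the expansion of ${}^{n}a$ with that of $a$ at the same level, and it plays no role in your sketch; (b) presentations of $a$ containing sinks cannot be pushed to a common level, so this case must be treated separately (the paper does this by observing that a sink summand of ${}^{n}a$ would have to survive unchanged into $c$, while every reduct of $a$ lives at strictly larger shifts), and your proposal does not address it at all.
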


\begin{proof} Suppose  ${}^{n} a < a$. Then   
\begin{equation}\label{sunghtgrexx}
{}^na+x=a,
\end{equation} where $x\not = 0$ and $n\in \mathbb Z_{<0}$. First assume that no sinks 
appear in \emph{any} presentation of $a$.  Let $a=v^1(i_1)+\dots+v^k(i_k)$ be a presentation of $a$, where $v^s\in E^0$ and $i_s \in \mathbb Z$, $1\leq s \leq k$. 
Since by (\ref{monoidrelation2}),  $v^s(i_s)=\sum_{e\in s^{-1}(v^s)} r(e)(i_s+1)$, we can shift each of the vertices enough times so that we re-write $a$ as $a=w^1(l)+\dots+w^p(l)$, for some $l\in \mathbb Z$ and $w^s\in E^0$, $1\leq s \leq p$. 
Now without loss of generality we can assume $a=w^1+\dots+w^p$ and ${}^n a < a$. 
We now pass the equality (\ref{sunghtgrexx}) to the monoid $\Mnb$, via the isomorphism~(\ref{forgthryhr2}), where $\overline E$ is the covering graph of $E$. By the confluence property, Lemma \ref{aralem6}, there is an element $c=u^1_m+\dots + u^q_m$, where $m\geq 0$  such that
\begin{align}\label{hgfgft}
w^1_0+\dots+w^p_0 &\longrightarrow u^1_m+\dots + u^q_m,\\
w^1_n+\dots+w^p_n + x &\longrightarrow u^1_m+\dots + u^q_m.\notag
 \end{align}
 Note that $c$ is an element in the free abelian monoid  generated by  
$\overline{E}^{0}$ and since we assumed all elements in the presentation of $a$ are regular, we can arrange that all generators of $c$ appear on the ``level'' $m$.  One should visualise this  by thinking that all the elements in $a$ are sitting on level 0, ${}^n a$ on the left hand side of $a$ on level $n$ (because $n$ is negative) and $c$ on the right hand side of $a$ on the level $m$ (see Example~\ref{level421}). 

Since the graph $\overline E$ is a stationary, namely the graph repeats going from level $i$ to level $i+1$, from (\ref{hgfgft}) we get
\begin{equation}\label{hfgftgftr63}
u^1_{n+m}+\dots+u^q_{n+m} + x \longrightarrow u^1_m+\dots + u^q_m.
\end{equation}
Since in each transformation $\rightarrow_{1}$ (see (\ref{hfgtrgt655})), the number of generators either increase or stay the same (and $c$ is the sum of independent generators), 
 it is not possible  the left hand side of (\ref{hfgftgftr63}) transforms to the right hand side and therefore we can't have ${}^n a < a$.  
 
 We are left to show that indeed no sinks 
 appear in any presentation of $a$. Let  $v^1$ be a sink in the presentation  $a=v^1(i_1)+\dots+v^k(i_k)$, where, as in the argument above, 
 all the shifts appearing in ${}^n a$ are less than shifts $i_1,\dots,i_k$ in $a$. Now  ${}^n a +x= a$ implies that 
 ${}^n a +x\rightarrow c$ and $a \rightarrow c$. However, since $v^1$ is sink, there is no transformation for $v^1$ and thus $v^1(n+i_1)$ in ${}^n a$ should appear in $c$. But the shifts under the transformation $\xra_1$ either increase or stay the same. Since $a\xra c$ we have that $c$ can not recover $v_1(n+i_1)$. This is a contradiction. 
\end{proof}

The following proposition is crucial for the rest of the results in the paper. 

\begin{prop}\label{goldenprop}
Let $E$ be a row-finite graph. 

\begin{enumerate}[\upshape(i)]

\item The graph $E$ has a cycle with no exit if and only if there is an $a\in \Mn$  such that ${}^n a=a$, where $n\in \mathbb Z_{<0}$. 

\medskip 

\item The graph $E$ has a cycle with an exit if and only if there is an $a\in \Mn$  such that ${}^n a > a$, where $n\in \mathbb Z_{<0}$.

\medskip 

\item The graph $E$ is acyclic if and only if for any  $a\in \Mn$ and $n \in \mathbb Z_{<0}$, ${}^n a \parallel a$, i.e., ${}^n a$ and $a$ are not comparable.

\end{enumerate}
\end{prop}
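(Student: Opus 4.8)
\textbf{Proof strategy for Proposition~\ref{goldenprop}.} The plan is to prove all three parts together, using Lemma~\ref{nalem} as the backbone: for $n \in \mathbb Z_{<0}$ and $a \in \Mn$, the elements ${}^n a$ and $a$ are either incomparable, or equal, or satisfy ${}^n a > a$ (the case ${}^n a < a$ being ruled out by Lemma~\ref{nalem}). Granting the two implications ``cycle without exit $\Rightarrow$ periodic element'' and ``cycle with exit $\Rightarrow$ element with ${}^n a > a$'' and their converses, part (iii) follows formally: if $E$ is acyclic it has no cycle at all, so neither a periodic $a$ nor an $a$ with ${}^n a > a$ can exist by the contrapositives of (i) and (ii), and combined with Lemma~\ref{nalem} this forces ${}^n a \parallel a$ always; conversely if $E$ has a cycle, that cycle either has an exit or not, and either way (i) or (ii) produces a comparable pair. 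So the real content is the forward and backward directions of (i) and (ii).

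\textbf{The ``easy'' directions (cycle $\Rightarrow$ monoid phenomenon).} Suppose $E$ has a cycle $c = \alpha_1 \cdots \alpha_\ell$ based at a vertex $v = s(\alpha_1)$, and suppose first it has \emph{no} exit. Then every vertex on the cycle is regular with a single outgoing edge, so applying the defining relations (\ref{monoidrelation2}) once around the cycle gives, in $\Mn$, the chain $v = v(0) = r(\alpha_1)(1) = \cdots = v(\ell)$; hence ${}^{\ell} v = v$, and taking $n = -\ell < 0$ gives ${}^n v = v$ with $a = v$ (and conversely $n=\ell$ works, but we want a negative shift — note ${}^{\ell}v = v$ implies ${}^{-\ell}v = v$ as well since the $\Z$-action is by automorphisms). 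If instead the cycle has an exit, pick the first vertex $w$ on the cycle admitting an extra edge $f \notin \{\alpha_1,\dots,\alpha_\ell\}$ with $s(f) = w$; walking the relation (\ref{monoidrelation2}) around the cycle starting and ending at $w$ now produces $w = w(\ell) + (\text{nonzero terms coming from } f \text{ and its descendants})$, i.e. ${}^{-\ell} w = w + x$ with $x \neq 0$, so ${}^{-\ell} w > w$. One must check $x \neq 0$, which holds because $r(f)(i)$ is a nonzero element of the free monoid $F_{\overline E}$ and, by the Confluence Lemma together with the acyclicity and stationarity of $\overline E$, cannot be absorbed — this is exactly the kind of ``generators can only increase'' argument already used in the proof of Lemma~\ref{nalem}.

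\textbf{The ``hard'' directions (monoid phenomenon $\Rightarrow$ cycle), which I expect to be the main obstacle.} Suppose ${}^n a = a$ in $\Mn$ for some $n < 0$. Transport this equality to $M_{\overline E}$ via the isomorphism (\ref{forgthryhr2}): writing $a$, after shifting as in Lemma~\ref{nalem}, as $w^1_0 + \dots + w^p_0$ sitting on level $0$, the Confluence Lemma gives a common descendant $c$ on some level $m \geq 0$ with $w^1_0 + \dots + w^p_0 \to c$ and $w^1_n + \dots + w^p_n \to c$. If $E$ were acyclic, then $\overline E$ would be acyclic with the additional feature that paths strictly increase level without bound in a controlled way; the argument of Lemma~\ref{nalem} (tracking that the total number of generators is nondecreasing under $\to_1$, and that a vertex on a ``lower'' level cannot be reached) would force a contradiction exactly as there — hence $E$ must contain a cycle. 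To upgrade ``$E$ has a cycle'' to ``$E$ has a cycle \emph{without} an exit'' one argues: if every cycle in $E$ had an exit, then (this is a standard dichotomy, e.g. via Condition (L) and the structure theory of Leavitt path algebras, or directly by a pruning argument on the finitely-presented-looking data) one can show no element of $\Mn$ is fixed by a negative shift — concretely, in the covering graph every vertex would eventually ``leak'' new generators when pushed forward, contradicting the equality ${}^n a = a$ which demands the generator-count be \emph{exactly} preserved from level $0$ down to level $n$ and back up to $c$. For part (ii), from ${}^n a > a$ we get ${}^n a + x = a$ with $x \neq 0$; the same transport to $M_{\overline E}$ plus Lemma~\ref{aralem6}(i) lets us split the reduction and conclude $E$ has a cycle (acyclicity would again contradict, as in Lemma~\ref{nalem}), and the presence of the nonzero ``overflow'' $x$ is what forces that cycle to have an exit — if all cycles were exit-free they would only contribute periodicity, never strict growth. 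The delicate point throughout is making the phrase ``generators can only increase or stay the same, and a stationary acyclic graph cannot recover a lower level'' fully rigorous simultaneously with the bookkeeping of \emph{which} level each generator lives on; I would model this carefully on the already-written proof of Lemma~\ref{nalem} and, for the cycle-with/without-exit refinement, invoke the hereditary-saturated-subset machinery (the quotient $E/H$ of the earlier Example, where killing the exits of a cycle turns ${}^{-\ell}o > o$ into ${}^{-\ell}o = o$) to localize attention to a single cycle.
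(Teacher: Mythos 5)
Your overall skeleton is the paper's: the easy directions of (i) and (ii), the formal deduction of (iii) from (i), (ii) and Lemma~\ref{nalem}, and the transport of equalities to $\Mnb$ via the isomorphism~(\ref{forgthryhr2}) followed by the Confluence Lemma are all as in the paper. But in the two hard directions the decisive step is missing, and what you put in its place is circular. For the converse of (i), after obtaining the common refinement $c$, you never extract a cycle \emph{without exit}; instead you assert that if every cycle of $E$ had an exit then no element of $\Mn$ could be fixed by a shift. That assertion is exactly Corollary~\ref{conLm}(i), which in the paper is a \emph{consequence} of Proposition~\ref{goldenprop}, so you are assuming the contrapositive of what you must prove, and it is not a citable standard fact independent of this proposition. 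Moreover your ``concrete'' justification --- that ${}^n a = a$ forces the generator count to be exactly preserved from $a$ down to $c$ --- is false: the common refinement $c$ can have strictly more generators than $a$ (take $u$ with two edges into a no-exit $2$-cycle; then ${}^2u=u$ while every refinement of $u$ has at least two generators). The paper's actual mechanism is different: stationarity lets one replace the two reductions $a\rightarrow c$ and ${}^n a\rightarrow c$ by a single reduction ${}^n c \rightarrow c$ between elements with the \emph{same} number $q$ of generators; since each application of $\rightarrow_1$ at a bifurcating vertex strictly increases the number of generators, every connecting path must be bifurcation-free, which yields a bijection $\rho$ from the level-$(m+n)$ list onto the level-$m$ list, and a power $\rho^l=1$ produces a closed bifurcation-free path in $E$, i.e.\ a cycle with no exit. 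Nothing in your sketch performs this extraction.

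The same problem occurs in the converse of (ii): your clinching claim ``if all cycles were exit-free they would only contribute periodicity, never strict growth'' is precisely the contrapositive of the statement being proved and is asserted, not argued. The paper instead splits $c=d+f$ with ${}^n a\rightarrow d$ and $x\rightarrow f$, uses stationarity to compare $d'$ (the unshifted transform of $a$) with $d+f$, and then runs the iterative $A_k, B_k$ construction to obtain a bijection whose power returns every vertex to itself --- producing cycles --- while the surplus $f$ forces some connecting path to bifurcate, hence an exit. A smaller but real omission: your reduction ``write $a$ with all generators on level $0$'' is only legitimate when no sinks occur in any presentation of $a$; the paper disposes of sinks by a separate shift-comparison argument (a sink's summand can never be recovered by $\rightarrow$, whose shifts only increase), and your proposal never addresses this case. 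As written, then, the proposal reproduces the paper's framework but leaves out the combinatorial heart of both converses, replacing it with appeals that either beg the question or rest on an incorrect counting claim.
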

\begin{proof}

(i) Suppose the graph $E$ has a cycle $c$ of length $n$ with no exit. Writing $c=c_1c_2\dots c_n$, where $c_i \in E^1$, since $c$ has no exit,  the relations in $\Mn$ show that $v=s(c_1)=r(c_1)(1)=r(c_2)(2)=\cdots =r(c_n)(n)=v(n)$. It follows that ${}^n v=v$ in  $\Mn$ (i.e. ${}^{-n} v=v$). Alternatively, one can see that ${}^{-1} a=a $, where $a=\sum_{i=1}^n r(c_i) \in \Mn$. 

Conversely, suppose  that there is an $a\in \Mn$ and $n \in \mathbb Z_{<0}$ such that  ${}^n a = a$. 
Let $a=v^1(i_1)+\dots+v^k(i_k)$  be a presentation of $a$, where $v^s\in E^0$ and $i_s \in \mathbb Z$. Similar to the proof of Lemma~\ref{nalem}, we first assume that no sinks appear in \emph{any} presentation of $a$. 
Thus $a=v^1(i_1)+\dots+v^k(i_k)$, where all $v^s$'s are regular. Since $v^s(i_s)=\sum_{e\in s^{-1}(v^s)} r(e)(i_s+1)$, we can shift each of the vertices enough times so that we re-write $a$ as $a=w^1(l)+\dots+w^p(l)$, for some $l\in \mathbb Z$ and $w^s\in E^0$, $1\leq s \leq p$. 
Now without loss of generality we can assume $a=w^1+\dots+w^p$ and ${}^n a=a$. 

We now pass $a$ to the monoid $\Mnb$, via the isomorphism~(\ref{forgthryhr2}), where $\overline E$ is the covering graph of $E$ which is acyclic by construction. Thus we have $w^1_0+\dots +w^p_0= w^1_n+\dots +w^p_n$ in $\Mnb$, where all $w^s_k \in \overline E^0$. By Lemma~\ref{aralem6}, there is an element $c=u^1_m+\dots + u^q_m$, where $m\geq 0$  such that $w^1_0+\dots +w^p_0 \rightarrow c$ and 
$w^1_n+\dots +w^p_n \rightarrow c$. Note that since we assumed all elements in the presentation of $a$ are regular, we can arrange that all generators of $c$ appear on the ``level'' $m$. Since the graph $\overline E$ is a stationary, namely the graph repeats going from level $i$ to level $i+1$, we have 
\[w^1_n+\dots +w^p_n \longrightarrow u^1_{m+n}+\dots u^q_{m+n},\] and consequently 
 \begin{equation}\label{subgdtgee1}
 u^1_{m+n}+\dots + u^q_{m+n} \longrightarrow u^1_m+\dots + u^q_m.
 \end{equation}
 Since the number of generators on the right and the left hand side of (\ref{subgdtgee1}) are the same and the relation $\rightarrow$ would increase the number of generators if there is  more than one edge emitting from a vertex, it follows that there is only one edge emitting from each vertex in the list $A=\{u^1_{m-n}, \dots , u^q_{m-n}\}$ and their subsequent vertices until the edges reach the list $B=\{u^1_m, \dots , u^q_m\}$. Thus we have a bijection $\rho: A\rightarrow B$. Consequently,  there is an $l\in \mathbb N$ such that $\rho^l=1$. Then we have $u^i_{m+n} \rightarrow u^i_{m-ln}$ for all elements of $A$. Going back to the graph $E$, this means there is a path with no bifurcation from $u$ to itself, namely there is a cycle with no exit based at $u$. 
 
 We are left to show that indeed no sinks appear in any presentation of $a$. Let $a=v^1(i_1)+\dots+v^k(i_k)$ be a presentation of $a$ with $v^1$ a sink. Since ${}^n a = a$, we can choose $n<0$ small enough that all the shifts appearing in ${}^n a$ are smaller than shifts $i_1,\dots,i_k$ in $a$. Now  ${}^n a = a$ implies that 
 ${}^n a \rightarrow c$ and $a \rightarrow c$. 
 However, since $v^1$ is sink, there is no transformation for $v^1$ and thus $v^1(n+i_1)$ should appear in $c$. But the shifts appearing in $c$ are bigger than those in $a$ as $a$ also transforms to $c$ which can't happen. Thus there are no sinks in a presentation of $a$.  

 \medskip 
 
(ii) Suppose the graph $E$ has a cycle $c=c_1c_2\dots c_n$ of length $n$ with exits.  Consider $a=\sum_{i=1}^n r(c_i) \in \Mn$. Now applying the transformation rule on each $r(c_i)$ we have $a= \sum_{i=1}^n r(c_i)(1)+x=a(1)+x$, where $x\not =0$ as the cycle has an exit and thus it branches out and other symbols appear in the transformation.   This shows ${}^{-1} a>a$ as claimed. 

Conversely, suppose  that there is an $a\in \Mn$ and $n \in \mathbb Z_{<0}$ such that  ${}^n a > a$.    
Let $a=v^1(i_1)+\dots+v^k(i_k)$ be a presentation of $a$, where $v^s\in E^0$ and $i_s \in \mathbb Z$. We can  choose a positive number $n$ such that ${}^n a < a$.  Further, we can choose $n$ big enough such that all the shifts appearing in ${}^n a$ are bigger than the shifts $i_1,\dots, i_k$ in $a$. We then have $a={}^n a +x $, for some nonzero 
$x\in \Mn$. We pass the equality to the monoid of the covering graph $\overline E$, via the isomorphism~(\ref{forgthryhr2}). By the confluence property, Lemma~\ref{aralem6}, there is $c$ such that $a\rightarrow c$ and ${}^n a +x \rightarrow c$. Using Lemma~\ref{aralem6}, we can then write $c=d+f$, where ${}^n a \rightarrow d$ and 
$x \rightarrow f$. Suppose $d=u^1(j_1)+\dots+u^t(j_t)$. Since the graph $\overline E$ is stationary, applying the same transformations done on ${}^n a$ to $a$ we obtain $a\rightarrow d'$, where $d'=u^1(j'_1)+\dots+u^t(j'_t)$ and $d' \rightarrow c$. 
Putting these together we have 
\begin{equation}\label{hghgyhuhrfe}
u^1(j'_1)+\dots+u^t(j'_t) \longrightarrow u^1(j_1)+\dots+u^t(j_t) +f. 
\end{equation}
Let $A_0$ be the list $\{u^1(j'_1),\dots,u^t(j'_t)\}$ appearing on the left hand side of (\ref{hghgyhuhrfe}) and $B_0$ the list $\{u^1(j_1),\dots,u^t(j_t)\}$ on the right hand side.  All the paths emitting from $A_0$ ends up in either $B_0$ or the list of vertices in $f$. Since the number of vertices (symbols) on the right hand side of (\ref{hghgyhuhrfe}) is more than the left hand side, some of the paths emitting from the left hand side has to have bifurcation.  Now let $A_1:=\phi^{-1}(B_0)$ denote all the vertices in $A_0$ which has a path ending in $B_0$. Clearly $A_1 \subseteq A_0$. Similarly there are paths coming from $A_1$ which have bifurcation. Denote by $B_1\subseteq B_0$ the list of same vertices (with possibly different shifts) which appear in $A_1$. Consider $A_2:=\phi^{-1}(B_1)$ which is $A_2\subseteq A_1$, i.e., vertices in $A_1$ which has a path ending in $B_1$. Repeating this process, since $A_0$ is finite, there is a $k$ such that $A_{k+1}=\phi^{-1}(B_k)$ and $A_{k+1}=A_{k}$. Note that $A_k$ and $B_k$ consist of same vertices (with possibly different shifts) and for each vertex in $A_k$ there is a path ending in $B_k$ and for each vertex in $B_k$ there is a \emph{unique} path from a vertex in $A_k$ to this vertex. Since $A_k$ is finite, this assignment defines a bijective function from $\rho: A_k \rightarrow B_k$. Thus there is an $l\in \mathbb N$ such that $\rho^l=1$. Since the graph $\overline E$ is stationary, this means after $l$ repeat, all paths return to the same vertices they started from, i.e., there are cycles in the graph. However, as some of the paths had to have bifurcation, there exists cycles with exits.

\medskip 

(iii)  Suppose $E$ is acyclic. For any $a\in \Mn$ and $n\in \mathbb Z_{<0}$,  we can't have ${}^na > a$ or ${}^na = a$,  otherwise by (i) and (ii) of the theorem,  $E$ has cycles.  On the other hand by Lemma \ref{nalem}, it is also not possible to have  ${}^na < a$ for any $n\in \mathbb Z_{<0}$.  Thus  ${}^na$ and $a$ are not comparable.

Conversely, suppose no elements of  $\Mn$ are comparable within its orbits. Then (i) and (ii) immediately imply that $E$ is acyclic. 
\end{proof}

Recall that if a group $\Gamma$ acts on a set $M$, then the action is \emph{free} if ${}^\gamma m=m$ implies that $\gamma$ is the identity of the group, i.e., all the isotropy groups of the action are trivial. 

\begin{cor}\label{conLm}
Let $E$ be a row-finite graph. 

\begin{enumerate}[\upshape(i)]

\item The graph $E$ satisfies Condition (L) if and only if $\mathbb Z$ acts freely on $\Mn$.


\medskip 

\item The graph $E$ satisfies Condition (K) if and only if $\mathbb Z$ acts freely on any quotient of $\Mn$ by an order-ideal.

\end{enumerate}
\end{cor}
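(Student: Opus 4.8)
The plan is to deduce both parts of Corollary~\ref{conLm} from Proposition~\ref{goldenprop} together with the standard graph-theoretic characterisations of Conditions (L) and (K). Recall that $E$ satisfies Condition (L) precisely when every cycle in $E$ has an exit, and $E$ satisfies Condition (K) precisely when for every vertex $v$ that lies on a cycle, $v$ lies on at least two distinct (simple) cycles; equivalently, \emph{every quotient graph $E/H$ by a hereditary saturated subset $H$ satisfies Condition (L)} (see \cite[\S2.9, \S3.3]{AAS}). The reformulation of Condition (K) in terms of quotient graphs is the key bridge that lets part (ii) follow from part (i) once we know how order-ideals of $\Mn$ correspond to quotient graphs.

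For part (i): if $E$ satisfies Condition (L), then $E$ has no cycle without an exit, so by Proposition~\ref{goldenprop}(i) there is no $a\in\Mn$ and $n\in\mathbb Z_{<0}$ with ${}^n a = a$. Since the action of any $n$ can be undone by the action of $-n$, an equality ${}^m a = a$ for some nonzero $m\in\mathbb Z$ forces, after possibly replacing $m$ by $-m$, an equality ${}^n a = a$ with $n<0$; hence no nonzero element of $\mathbb Z$ fixes any element of $\Mn$, i.e. $\mathbb Z$ acts freely. Conversely, if $E$ fails Condition (L) it has a cycle with no exit, and Proposition~\ref{goldenprop}(i) produces $a\in\Mn$ with ${}^n a = a$ for some $n<0$, so the action is not free. (The only mild point to note is that $n\neq 0$ here because $n\in\mathbb Z_{<0}$.)

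For part (ii): by Lemma~\ref{qmiso}, the quotients of $\Mn$ by order-ideals are, up to $\mathbb Z$-module isomorphism, exactly the monoids $M^{\gr}_{E/H}$ as $H$ ranges over hereditary saturated subsets of $E^0$ — here we use the lattice isomorphism (\ref{latticeisosecideal2}) identifying $\LL(\Mn)$ with $\TT_E$, so that $I = \langle H\rangle$ and $\Mn/I \cong M^{\gr}_{E/H}$. Applying part (i) to the graph $E/H$, we get that $\mathbb Z$ acts freely on $\Mn/\langle H\rangle$ if and only if $E/H$ satisfies Condition (L). Thus $\mathbb Z$ acts freely on \emph{every} quotient of $\Mn$ by an order-ideal if and only if $E/H$ satisfies Condition (L) for every hereditary saturated $H$, which by the quotient-graph characterisation of Condition (K) is equivalent to $E$ satisfying Condition (K).

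The main obstacle is not any deep estimate but rather making sure the correspondences line up cleanly: one must invoke Lemma~\ref{qmiso} and (\ref{latticeisosecideal2}) to know that \emph{every} order-ideal quotient of $\Mn$ arises from a quotient graph (so nothing is missed on the monoid side), and one must cite the equivalence ``$E$ has Condition (K)'' $\iff$ ``$E/H$ has Condition (L) for all hereditary saturated $H$'' (so nothing is missed on the graph side). Provided these two facts are quoted correctly, the proof is a short formal deduction from Proposition~\ref{goldenprop}(i).
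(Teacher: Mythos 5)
Your proposal is correct and takes essentially the same route as the paper: part (i) is deduced directly from Proposition~\ref{goldenprop}(i) (your explicit reduction of a fixing shift $m>0$ to the case $n=-m<0$ is a small point the paper leaves implicit), and part (ii) combines Lemma~\ref{qmiso} and the correspondence (\ref{latticeisosecideal2}) with the characterisation of Condition (K) via Condition (L) for all quotient graphs $E/H$, exactly as the paper does.
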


\begin{proof}

(i)  Suppose the graph $E$ satisfies Condition (L). If ${}^n a= a$, for some $n \in \mathbb Z_{<0}$, and $a\in \Mn$, then $E$ has a cycle without exit by Proposition~\ref{goldenprop} which is not possible. Thus $\mathbb Z$ acts freely on $\Mn$. 

Conversely, if $E$ has a cycle without exit, then there is an $a\in \Mn$ and $n \in \mathbb Z_{<0}$ such that  ${}^n a=a$ (Proposition~\ref{goldenprop}) which contradicts the freeness of the action. 

\medskip 

(ii)  Recall that the graph $E$ satisfies Condition (K) if and only if for every hereditary saturated subset $H$ the quotient graph $E/ H$ satisfies Condition (L). 

Suppose $E$ satisfies condition (K) and $I$ is an order-ideal of $\Mn$. Then there is a hereditary saturated subset $H\subseteq E^0$ which generates $I$. Since by Lemma~\ref{qmiso},  there is a $\mathbb Z$-module isomorphism  $\Mn /I \cong M^{\gr}_{E / H}$, and $E / H$ has condition (L), by part (i), $\mathbb Z$ acts freely on $M^{\gr}_{E / H}$ and thus on 
$\Mn /I$. 

Conversely, for any hereditary saturated subset $H$, and the corresponding order-ideal $I$ of $\Mn$, since $\mathbb Z$ acts freely on $\Mn /I \cong M^{\gr}_{E / H}$, it follows by part (i) that $E/ H$ has condition (L). It then follows that $E$ has condition (K). 
\end{proof}

\section{The ideal structure of $L_F(E)$ via the monoid  $\Mn$} \label{seciv}

There is an alluring proposition  in the book of Abrams, Ara and Molina Siles~\cite[Proposition~6.1.12]{AAS}, which states that for a finite graph $E$, the Leavitt path algebra $L_F(E)$ is purely infinite simple if and only if $M_E \backslash \{0\}$ is a group. As the monoid $\Mn$ is a much richer object than $M_E$, it is expected to capture more of the structure of the Leavitt path algebra $L_F(E)$.    

We start with an immediate corollary of the results on the periodicity of elements of $\Mn$ established in the previous section.

\begin{cor}\label{conKm}
Let $E$ be a row-finite graph and $L_F(E)$ its associated Leavitt path algebra. 

\begin{enumerate}[\upshape(i)]

\item The algebra $L_F(E)$ is graded simple if and only if $\Mn$ is simple. 

\medskip 

\item The algebra $L_F(E)$ is simple if and only if $\Mn$ is simple and for any $a\in \Mn$, if ${}^n a$ and $a$ are comparable, $n \in \mathbb Z_{<0}$, then ${}^n a > a$. 

\medskip 

\item The algebra $L_F(E)$ is purely infinite simple if and only if $\Mn$ is simple, if ${}^n a$ and $a$ are comparable, $n \in \mathbb Z_{<0}$, then ${}^n a > a$ and there is 
an $a\in \Mn$ such that ${}^n a > a$. 

\medskip 

\item 
If $E$ is a finite graph, then $L_F(E)$ is purely infinite simple if and only if $\Mn$ is simple and for any $a\in \Mn$ there is $n \in \mathbb Z_{<0}$ such that  ${}^n a > a$.

\end{enumerate}
\end{cor}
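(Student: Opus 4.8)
The plan is to deduce Corollary~\ref{conKm} from the classical Leavitt path algebra characterisations together with Proposition~\ref{goldenprop} and the graded simplicity equivalence established via the lattice isomorphism~(\ref{latticeisosecideal3}). Recall that, for a row-finite graph $E$, the algebra $L_F(E)$ is graded simple if and only if $E$ has no nontrivial hereditary saturated subsets and is cofinal; it is simple if and only if in addition $E$ satisfies Condition~(L); and it is purely infinite simple if and only if it is simple and every vertex connects to a cycle. So the strategy in each part is: translate the graph-theoretic hypothesis into a monoid statement, using~(\ref{latticeisosecideal3}) for the ``no nontrivial ideals'' half and Proposition~\ref{goldenprop} for the ``Condition~(L)'' and ``cycle with an exit'' halves.

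First I would handle (i): it is just the already-recorded fact at the end of Section~2 that $\Phi$ of~(\ref{latticeisosecideal3}) is a lattice isomorphism $\LL(\Mn)\cong\LL^{\gr}(L_F(E))$, so $L_F(E)$ is graded simple exactly when $\LL(\Mn)=\{0,\Mn\}$, i.e. $\Mn$ is simple. For (ii), start from the classical equivalence: $L_F(E)$ is simple iff $L_F(E)$ is graded simple and $E$ satisfies Condition~(L). By (i), graded simplicity of $L_F(E)$ is equivalent to simplicity of $\Mn$. By Corollary~\ref{conLm}(i), Condition~(L) is equivalent to $\mathbb Z$ acting freely on $\Mn$, i.e. there is no $a\in\Mn$ and $n\in\mathbb Z_{<0}$ with ${}^n a=a$. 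Now combine with Lemma~\ref{nalem}, which forbids ${}^n a<a$ for $n\in\mathbb Z_{<0}$: the three possibilities for comparable ${}^n a$ and $a$ are ${}^n a<a$ (impossible), ${}^n a=a$ (ruled out by Condition~(L)), and ${}^n a>a$. Hence Condition~(L) together with Lemma~\ref{nalem} is precisely the statement ``whenever ${}^n a$ and $a$ are comparable with $n\in\mathbb Z_{<0}$, then ${}^n a>a$'', which gives the displayed form in (ii).

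For (iii), use the classical characterisation that $L_F(E)$ is purely infinite simple iff it is simple and every vertex of $E$ connects to a cycle; in the cofinal case (which holds since $\Mn$ is simple) this is equivalent to $E$ having at least one cycle, hence — since Condition~(L) already holds — a cycle with an exit. By Proposition~\ref{goldenprop}(ii), the existence of a cycle with an exit is equivalent to the existence of $a\in\Mn$ and $n\in\mathbb Z_{<0}$ with ${}^n a>a$. Adjoining this to the conditions of (ii) yields the statement of (iii). For (iv), assume $E$ finite: purely infinite simplicity is then equivalent to simplicity plus the condition that \emph{every} vertex connects to a cycle. I would argue that, under simplicity (hence cofinality and Condition~(L)), ``every vertex connects to a cycle (necessarily with an exit)'' translates, via the covering-graph and confluence arguments of Proposition~\ref{goldenprop}, into ``for every $a\in\Mn$ there is $n\in\mathbb Z_{<0}$ with ${}^n a>a$'': given a presentation $a=\sum w^j$ with all $w^j$ regular (automatic here, as simplicity with more than one vertex forces no sinks — or handle the one-vertex cases separately), finiteness of $E^0$ and cofinality let one route every $w^j$ into a common cycle with an exit, producing the strict inequality after a suitable negative shift; conversely ${}^n a>a$ for all $a$, applied to $a=v$, forces $v$ to reach a cycle with an exit by Proposition~\ref{goldenprop}(ii) applied inside the order-ideal generated by $v$. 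The main obstacle I anticipate is exactly this last ``uniform over all vertices'' translation in (iv): Proposition~\ref{goldenprop}(ii) only asserts existence of \emph{some} $a$, so upgrading to \emph{every} $a$ requires combining finiteness of the graph with cofinality to merge the orbits of all vertices into the single witnessing cycle, and being careful that the presentation of an arbitrary $a\in\Mn$ can be taken with all generators regular on one level before applying the covering-graph machinery.
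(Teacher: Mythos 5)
Parts (i)--(iii) of your proposal follow the paper's route exactly: (i) is the lattice isomorphism~(\ref{latticeisosecideal3}), (ii) combines the classical simplicity criterion with Corollary~\ref{conLm}(i) and Lemma~\ref{nalem} to reduce the trichotomy to ${}^n a>a$, and (iii) adds Proposition~\ref{goldenprop}(ii) for the existence of a cycle with an exit. Two small inaccuracies there: for row-finite graphs cofinality \emph{is} the absence of nontrivial hereditary saturated subsets (so your graded-simplicity statement is redundant, not wrong), and your parenthetical ``simplicity with more than one vertex forces no sinks'' is false (the graph $\bullet\to\bullet$ gives the simple algebra $M_2(F)$); what you actually need, and what does hold, is that \emph{purely infinite} simplicity forces no sinks, since a sink cannot connect to a cycle.

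The genuine gap is the forward direction of (iv), which you yourself flag as the main obstacle but do not resolve, and the mechanism you sketch would not close it. Routing each generator $w^j$ into a (common) cycle with an exit only yields $w^j\geq c(k)$ for some vertex $c$ on that cycle; this gives no way to recover $w^j$ itself at a later shift, i.e.\ it does not produce the required relation $w^j=w^j(l)+x$ with $x\neq 0$. The paper's argument is different and purely local in $\Mn$ (no covering graph or confluence is needed for this direction): since $E$ is finite with every vertex connecting to a cycle, every cycle-free path from a vertex $v$ to a cycle has length at most $|E^0|$, and by induction on this maximal length one writes $v=\sum_i v_i(s_i)$ with \emph{every} $v_i$ lying on a cycle $C_i$; Condition (L) gives an exit on each $C_i$, hence $v_i>v_i(l_i)$ with $l_i=|C_i|$, and taking $l$ a common multiple of the $l_i$ yields $v>v(l)$, i.e.\ ${}^{-l}v>v$. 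For a general $a=\sum_t u_t(j_t)$ one then takes a common multiple of the integers found for the finitely many $u_t$. Your ``merge everything into a single witnessing cycle via cofinality'' idea skips precisely this complete decomposition into cycle vertices, which is the step that makes the strict self-comparison possible. (For the converse of (iv) your reduction is fine in spirit, matching the paper's appeal to (iii), though to apply (iii) one should also note that a periodic element is impossible under the hypothesis of (iv) --- e.g.\ because ${}^m a=a$ together with ${}^n a=a+x$, $x\neq 0$, contradicts cancellativity of $\Mn$.)
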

\begin{proof}

(i) This follows from the fact that there is a lattice isomorphism between the graded ideals of $L_F(E)$ and $\mathbb Z$-order-ideals of  $\Mn$  via the lattice of hereditary saturated subsets of  the graph $E$ (see \eqref{latticeisosecideal3}).  

\medskip 

(ii) The algebra $L_F(E)$ is simple if and only if $E$ has no non-trivial hereditary saturated subsets and $E$ satisfies Condition (L) (\cite[Theorem~2.9.1]{AAS}). By Corollary~\ref{conLm}, $E$ has Condition (L) if and only if  for any $a\in \Mn$ and $n\in \mathbb Z$, ${}^n a\neq a$. It follows that if 
${}^n a$ and $a$ are comparable, $n \in \mathbb Z_{<0}$, then either ${}^n a > a$ or ${}^n a <a$. The proof now follows from combining this fact with part (i) and Lemma \ref{nalem}.

\medskip

(iii) The algebra $L_F(E)$ is purely infinite simple if and only if $E$ has no non-trivial hereditary saturated subsets, $E$ satisfies Condition (L) and $E$ has at least one cycle with an exit (\cite[Theorem~3.1.10]{AAS}). The proofs now follow from combining this fact with part (i) and (ii) and Corollary~\ref{conLm}. 

\medskip

(iv) Suppose that $L_F(E)$ is purely infinite simple. We need to prove that for any $a\in\Mn$, ${}^n a$ and $a$ are comparable for some $n\in \mathbb Z_{<0}$. 
We claim that for any $v\in E^0$ there is an $n\in\mathbb Z_{<0}$ such that ${}^n v>v$.  For any $v\in E^0$, let $X_v=\{p\;|\; s(p)=v,  p \text{~is a finite path not containing a cycle}, \text{~and~} r(p) \text{~is on a cycle}\}$. Then for each $p\in X_v$, the length of $p$ is at most $|E^0|$ as any path of length large than $|E^0|$ contains a cycle. Then the maximal number in $\{l(p)\;|\; p\in X_v\}$ is less than or equal to $|E^0|$. We have a representation $v=\sum_{i=1}^n v_i(s_i)$ such that all $v_i$'s are on cycles $C_1, \cdots, C_n$ (possibly $C_i=C_{i'}$ with $i\neq i'$) by induction on the maximal length of paths in $X_v$. 
Let $l_i$ denote the length of the cycle $C_i$. Then $v_i >v_i(l_i)$ for $1\leq i\leq n$ as each cycle among $C_1,\cdots, C_n$ has exits.
Let $l$ be the minimal common multiple of $l_1, \cdots, l_n$. Then $v_i >v_i(l)$ for $1\leq i\leq n$ and thus $v>v(l)$, i.e. ${}^{-l}v>v$ as claimed. Now for any $a\in \Mn$, $a=\sum_{s=1}^ku_t(j_t)$ (possibly $v_t=v_{t'}$ if $t\neq t'$). As for each vertex $u_t$ there exists $m_t$ such that ${}^{-m_t}u_t> u_t$. Take $n=-\prod_{t=1}^k m_t$. Then we have ${}^{n} a={}^{-\prod_{t=1}^k m_t}\sum_{t=1}^k u_t(j_t)>\sum_{t=1}^k u_t(j_t)=a$. Conversely, the proof follows from (iii).

\end{proof}


These results show that acyclicity/simplicity or purely infinite simplicity of algebras are preserved under an order-isomorphism between their graded Grothendieck groups. 

Although the monoid $\Mn$ is constructed from the graded projective modules, it can however detect the non-graded structure of a Leavitt path algebra. Here is the first instance in this direction.  We will produce more evidence of this in Theorem~\ref{mainthemethe}. 

\begin{prop}\label{propnongra}
Let $E$ be a row-finite graph and $L_F(E)$ its associated Leavitt path algebra. Then $L_F(E)$ has a non-graded ideal if and only if there is an order-ideal $I$ of $\Mn$ such that the quotient monoid $\Mn/I$ has a periodic element. 
\end{prop}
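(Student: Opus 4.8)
The plan is to use the lattice isomorphisms recorded in \eqref{latticeisosecideal}--\eqref{latticeisosecideal3} together with Lemma~\ref{qmiso} to translate ``non-graded ideal'' into the language of the monoid. The key structural fact from the theory of Leavitt path algebras is that every non-graded ideal of $L_F(E)$ ``sits above'' a graded ideal whose corresponding quotient graph has a cycle without exit: more precisely, if $L_F(E)$ has a non-graded ideal, then there is a graded ideal $I(H)$ (with $H$ hereditary saturated) such that the quotient $L_F(E)/I(H) \cong L_F(E/H)$ has a non-graded ideal \emph{generated by a cycle without exit in $E/H$}; equivalently, $E/H$ fails Condition~(L). (This is the standard structure theorem for ideals, e.g.\ \cite[Theorem~2.8.10]{AAS} or the Structure Theorem for graded ideals combined with the description of ideals of quotients.) Conversely, if some quotient graph $E/H$ fails Condition~(L), then $L_F(E/H)$ has a non-graded ideal, and pulling back along $L_F(E) \twoheadrightarrow L_F(E/H)$ produces a non-graded ideal of $L_F(E)$.

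So the argument splits into two implications. For the forward direction: assume $L_F(E)$ has a non-graded ideal. By the structure theory just recalled, there is a hereditary saturated $H$ with $E/H$ not satisfying Condition~(L). Set $I = \langle H\rangle \subseteq \Mn$, the order-ideal generated by $H$. By Lemma~\ref{qmiso} there is a $\mathbb{Z}$-module isomorphism $\Mn/I \cong M^{\gr}_{E/H}$. Since $E/H$ has a cycle with no exit, Proposition~\ref{goldenprop}(i) applied to the graph $E/H$ gives an element $a \in M^{\gr}_{E/H}$ and $n \in \mathbb{Z}_{<0}$ with ${}^n a = a$; transporting $a$ through the isomorphism yields a periodic element of $\Mn/I$, as required.

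For the converse: suppose $I$ is an order-ideal of $\Mn$ and $\Mn/I$ has a periodic element. By the lattice isomorphism \eqref{latticeisosecideal2}, $I = \langle H\rangle$ for some hereditary saturated $H \subseteq E^0$, and by Lemma~\ref{qmiso} again $\Mn/I \cong M^{\gr}_{E/H}$, so $M^{\gr}_{E/H}$ has a periodic element. If the period were realized only by $\alpha = 0$ there would be nothing periodic, so there is an element fixed by some nonzero $n$; we may take $n < 0$ after replacing $n$ by $-n$ and using that the action is by a group, hence Proposition~\ref{goldenprop}(i) (for the graph $E/H$) shows $E/H$ has a cycle with no exit, i.e.\ $E/H$ fails Condition~(L). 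Therefore $L_F(E/H)$ has a non-graded ideal; since $L_F(E/H) \cong L_F(E)/I(H)$ with $I(H)$ graded, the preimage in $L_F(E)$ of this non-graded ideal is a non-graded ideal of $L_F(E)$ (it cannot be graded, for a graded ideal of $L_F(E)$ maps to a graded ideal of the graded quotient $L_F(E)/I(H)$).

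The main obstacle is making the forward direction precise: one must invoke the correct structure theorem asserting that a non-graded ideal forces some quotient graph to fail Condition~(L). The cleanest route is: a non-graded ideal $J$ has a graded part $J^{\gr} = I(H)$ for a hereditary saturated $H$, and the image $J/J^{\gr}$ is a nonzero non-graded ideal of $L_F(E)/I(H) \cong L_F(E/H)$; a Leavitt path algebra has a nonzero non-graded ideal only when the graph violates Condition~(L) (otherwise, by \cite[Theorem~2.9.?]{AAS} / the graded-ideal structure theorem under Condition~(L), every ideal is graded). One should double-check the edge case where $H = E^0$ (the quotient graph is empty) cannot arise for a \emph{proper} non-graded ideal, and that the periodic element produced is genuinely nonzero, which is automatic since in $\Mn$ and its quotients the zero element is the only one with $\alpha a = a$ for all $\alpha$ vacuously but a \emph{specific} nonzero $n$ fixing $0$ is trivial — here the point is that Proposition~\ref{goldenprop}(i) produces an element that is a sum of range vertices of a cycle, hence nonzero in $M^{\gr}_{E/H}$, and this nonzeroness persists.
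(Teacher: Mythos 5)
Your converse direction is correct and essentially matches the paper's: use the lattice correspondence \eqref{latticeisosecideal2} and Lemma~\ref{qmiso} to get $\Mn/I\cong M^{\gr}_{E/H}$, apply Proposition~\ref{goldenprop}(i) to $E/H$, produce a non-graded ideal of $L_F(E/H)$ from a cycle without exit, and pull it back (your observation that the preimage cannot be graded, since graded ideals map to graded ideals under the graded surjection, is right). The forward direction, however, leans on a false statement: you assert that a Leavitt path algebra has a nonzero non-graded ideal only when its graph violates Condition~(L), i.e.\ that under Condition~(L) every ideal is graded. That characterises Condition~(K), not~(L). For instance, take $E$ with vertices $u,v,w$, a loop at $u$, an edge $u\to v$, a loop $f$ at $v$, an edge $v\to w$, and $w$ a sink: every cycle of $E$ has an exit, so $E$ satisfies Condition~(L), yet the ideal generated by $w$ and $v+f$ is non-graded, because $f$ loses its only exit in $E/\{w\}$. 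So the lemma you invoke to conclude that $E/H$ fails Condition~(L) does not hold as stated.

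The gap is repairable, and your own set-up nearly does it: since $J^{\gr}=I(H)$ is the largest graded ideal contained in $J$, the image $J/J^{\gr}$ in $L_F(E/H)$ contains no nonzero homogeneous element (a homogeneous element of the quotient lifts to a homogeneous element of $L_F(E)$ which, if it lay in $J$, would already lie in $J^{\gr}$); in particular it contains no vertex of $E/H$. Now invoke the Cuntz--Krieger Uniqueness consequence that if a graph satisfies Condition~(L), then every nonzero ideal of its Leavitt path algebra contains a vertex; hence $E/H$ fails~(L), i.e.\ has a cycle with no exit, and the rest of your forward direction (Lemma~\ref{qmiso} plus Proposition~\ref{goldenprop}(i)) goes through, with the periodicity witness nonzero exactly as you note. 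Alternatively---and this is the paper's route---cite the Structure Theorem for ideals \cite[Proposition~2.8.11]{AAS} directly: a non-graded ideal determines a hereditary saturated $H$ together with a \emph{non-empty} set $C$ of cycles all of whose exits lie in $H$, so these cycles have no exit in $E/H$ and one concludes as before. Either fix makes your argument sound; as written, the key step is justified by a statement that is simply not true.
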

\begin{proof}

Suppose $L_F(E)$ has a non-graded ideal. Then there is a hereditary saturated subset $H$ and a non-empty set $C$ of cycles which have all their exits in $H$ (see \cite[Proposition 2.8.11]{AAS}). Thus $E/ H$ is a graph for which cycles in $C$ have no exit. By Proposition~\ref{goldenprop}, $M^{\gr}_{E / H}$ has periodic elements. But by Lemma~\ref{qmiso}, 
$M^{\gr}_{E / H}  \cong \Mn / I$, where $I$ is the order-ideal generated by $H$ and so it has periodic elements.  The converse argument is similar. 
\end{proof}

For $a\in \Mn$, we denote the smallest $\mathbb  Z$-order-ideal generated by $a$ by $\langle a \rangle $. It is easy to see that 
\begin{equation}\label{oridealhj}
\langle a \rangle=\big \{ x \in \Mn \mid x \leq \sum_{} {}^i a \big \}. 
\end{equation}
Thus for a vertex $v$, denoting $\overline v$ for the smallest hereditary saturated subset containing $v$, we have 
\begin{equation}\label{oridealhj2}
\langle v \rangle=\langle \overline v \rangle.   
\end{equation}

The following lemmas show how these ideals of the monoid capture the geometry of the graph.  Recall the notion of ``local'' cofinality from \S\ref{graphsec}. The proof of the following lemma is a ``local'' version of \cite[Lemma 2.9.6]{AAS} and we leave it to the reader. 

\begin{lem}\label{localcofin}
Let $E$ be a row-finite graph. For $v,w\in E^0$ the following are equivalent. 

\begin{enumerate}[\upshape(i)]

\item $\langle w \rangle  \subseteq \langle v \rangle$;

\medskip

\item the vertex $v$ is cofinal with respect to $w\in E^0$;
\medskip

\item if $v \in H$, then $w\in H$, where $H$ is a hereditary saturated subset of $E$. 

\end{enumerate}
\end{lem}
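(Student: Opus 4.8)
The plan is to prove the three equivalences in a cycle: $(i)\Rightarrow(iii)\Rightarrow(ii)\Rightarrow(i)$, exploiting the description \eqref{oridealhj} of $\langle a\rangle$ as a downward-closed, $\mathbb Z$-invariant set together with the lattice isomorphisms \eqref{latticeisosecideal2} and \eqref{oridealhj2}.

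First I would establish $(i)\Rightarrow(iii)$. Suppose $\langle w\rangle\subseteq\langle v\rangle$ and let $H$ be a hereditary saturated subset with $v\in H$. By \eqref{oridealhj2}, $\langle v\rangle=\langle\overline v\rangle$, and $\overline v\subseteq H$ since $H$ is hereditary saturated and contains $v$. Hence $\langle w\rangle\subseteq\langle v\rangle=\langle\overline v\rangle\subseteq\langle H\rangle$, and in particular $w\in\langle H\rangle$. Since $\langle H\rangle$ is the order-ideal corresponding to $H$ under the isomorphism \eqref{latticeisosecideal2}, and vertices of $\Mn$ lying in $\langle H\rangle$ are exactly the vertices of $H$ (this is the content of the correspondence together with the fact that $\langle H\rangle\cap E^0=H$), we get $w\in H$.

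Next, $(iii)\Rightarrow(ii)$. Assume $(iii)$ and take any $\alpha\in wE^{\le\infty}$; I must produce a path from $v$ connecting to a vertex on $\alpha$. Suppose not: then no vertex appearing on $\alpha$ lies in $T(v)$. Consider $H:=E^0\setminus M(\{$vertices of $\alpha\})$ — more precisely, I would take $H$ to be the hereditary saturated closure of $\{w\}$ and argue by contradiction using the structure of $\alpha$, mirroring the proof of \cite[Lemma~2.9.6]{AAS}: build a hereditary saturated set containing $v$ but not $w$ by removing the ``tail'' reachable from $\alpha$. The key point is that if $v$ fails to be cofinal with respect to $w$ along the witnessing path $\alpha$, the saturation process starting from the complement of $\{$vertices of $\alpha$ and everything below them$\}$ stabilises to a hereditary saturated set containing $v$ but missing $w$, contradicting $(iii)$. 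This is the step I expect to be the main obstacle, since it requires carefully handling the two cases in the definition of $E^{\le\infty}$ (an infinite path versus a finite path ending at a singular vertex) and checking that the set one constructs is genuinely saturated; it is, however, exactly parallel to the classical cofinality argument, now localised.

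Finally, $(ii)\Rightarrow(i)$. Assume $v$ is cofinal with respect to $w$. By \eqref{oridealhj} it suffices to show $w\le\sum_i{}^i v$ in $\Mn$, equivalently (passing through the forgetful map \eqref{forgthryhr} and the covering-graph isomorphism \eqref{forgthryhr2}) that $w$ lies in the order-ideal of $M_{\overline E}$ generated by the orbit of $v$. Now apply the relations \eqref{monoidrelation2} repeatedly starting from $w$: expanding $w=w(0)$ downstream, every path out of $w$ that continues indefinitely, or terminates at a singular vertex, is an element of $wE^{\le\infty}$, hence by cofinality meets a vertex reachable from $v$; such a vertex $u$ satisfies $u\le {}^{i}v$ for a suitable shift $i$ (via a path from $v$ to $u$ in $E$, translated into a transformation $v(j)\to\dots\to u(j')+(\text{rest})$ in $\Mn$). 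Since $E$ is row-finite, a König's-lemma argument shows this expansion of $w$ terminates after finitely many steps in a finite sum of vertices, each of which is $\le$ some ${}^i v$; summing, $w\le\sum_i{}^i v$, so $w\in\langle v\rangle$ and therefore $\langle w\rangle\subseteq\langle v\rangle$. This completes the cycle and proves the lemma.
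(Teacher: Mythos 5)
The paper itself gives no written proof of this lemma (it is left to the reader as a ``local'' version of \cite[Lemma~2.9.6]{AAS}), so your proposal has to stand on its own. Its architecture is sound: (i)$\Rightarrow$(iii) via $\langle w\rangle\subseteq\langle v\rangle=\langle \overline v\rangle\subseteq\langle H\rangle$ together with $\langle H\rangle\cap E^0=H$ is fine, and (ii)$\Rightarrow$(i) is essentially complete: expand $w$ by the relations \eqref{monoidrelation2}, stop a branch as soon as it hits $T(v)$, rule out stopping at a sink outside $T(v)$ because the length-zero path at such a sink lies in $wE^{\leq\infty}$, and use row-finiteness plus K\"onig's lemma to see the expansion halts after finitely many steps with every terminal vertex dominated by a shift of $v$, whence $w\le\sum_i {}^i v$ and $w\in\langle v\rangle$ by \eqref{oridealhj}.

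The genuine soft spot is (iii)$\Rightarrow$(ii), where the witnessing hereditary saturated set is misidentified and the key verification is skipped. The ``hereditary saturated closure of $\{w\}$'' is the wrong object: it contains $w$ by construction and need not contain $v$, so it can never contradict (iii); similarly, removing ``the tail reachable from $\alpha$'', i.e.\ taking $E^0\setminus T(A)$ with $A$ the vertex set of $\alpha$, produces a set that is in general neither hereditary nor guaranteed to contain $v$. The correct choice is the one in your displayed formula: $H_0=E^0\setminus M(A)$, the vertices which cannot reach $\alpha$. This is hereditary (if $s(e)\in H_0$ but $r(e)\geq a$ for some $a\in A$, then $s(e)\geq a$, a contradiction) and contains $v$ under the contradiction hypothesis. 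What is then still missing --- and it is precisely the content of the ``local'' \cite[Lemma~2.9.6]{AAS} argument the paper alludes to --- is the check that $w$ is not absorbed when you saturate $H_0$: one shows by induction along the saturation steps that no vertex $u$ of $\alpha$ ever enters, because if $u$ is regular then $\alpha$ continues past $u$ (here you use that, $E$ being row-finite, a path in $wE^{\leq\infty}$ can only terminate at a sink), so the next vertex of $\alpha$ is an element of $r(s^{-1}(u))$ not yet in the set, preventing $u$ from being added. With $H=\overline{H_0}$ one gets $v\in H$, $w\notin H$, contradicting (iii). Once that induction is written out, your proof is complete.
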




\begin{lem}\label{dowdirtwo}
Let $E$ be a row-finite graph. 

\begin{enumerate}[\upshape(i)]

\item For $u,v\in E^0$, $\langle u \rangle \cap \langle v \rangle \not = 0$ if and only if $u$ and $v$ are downward directed. 

\medskip

\item The vertex $v$ is cofinal if and only if $\langle v \rangle=\Mn$.

\medskip 

\item $E$ is cofinal if and only if $\langle v \rangle=\Mn$ for every $v\in E^0$. 

\end{enumerate}
\end{lem}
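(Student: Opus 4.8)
The strategy is to reduce each of the three parts to Lemma~\ref{localcofin} together with the explicit description \eqref{oridealhj} of the order-ideal generated by a single element, and to the translation between cofinality of the graph and the ideal lattice already recorded in \eqref{oridealhj2}. For part (i), the plan is to show both implications via the structure of the free monoid and the Confluence Lemma (Lemma~\ref{aralem6}). If $u$ and $v$ are downward directed, there is a vertex $w$ with $u\ge w$ and $v\ge w$; then $w$ (as an element of $\Mn$, i.e.\ $w=w(0)$) can be reached from a shift ${}^{i}u$ and from a shift ${}^{j}v$ by applying the defining relations \eqref{monoidrelation2} along the two paths, so $w\le \sum {}^i u$ and $w\le \sum {}^i v$, whence $0\ne w\in\langle u\rangle\cap\langle v\rangle$ by \eqref{oridealhj}. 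Conversely, if $0\ne x\in\langle u\rangle\cap\langle v\rangle$, pick a vertex $z$ appearing in a presentation of $x$; then $z\le \sum{}^i u$ and $z\le\sum{}^j v$, and passing these inequalities to $M_{\overline E}$ via \eqref{forgthryhr2} and applying Lemma~\ref{aralem6}(i)--(ii) produces a common vertex downstream of both $u$ and $v$ in $E$ — this requires unwinding a presentation of a sum of shifts of $u$ (resp.\ $v$) into vertices of $\overline E$ and using that each generator of the presentation of $z$ must be dominated by one summand, giving a path in $E$ from $u$ (resp.\ $v$) to (a vertex in the presentation of) $z$; choosing that vertex as $w$ gives downward directedness.

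For part (ii), the plan is to combine Lemma~\ref{localcofin} with the observation that $\langle v\rangle=\Mn$ is equivalent to $w\in\langle v\rangle$ for every $w\in E^0$ (since the vertices generate $\Mn$ as a monoid and an order-ideal is a submonoid closed downward and under the $\Z$-action), which by Lemma~\ref{localcofin}(i)$\Leftrightarrow$(ii) is equivalent to $v$ being cofinal with respect to every $w$, i.e.\ $v$ cofinal. Part (iii) is then immediate: $E$ is cofinal by definition iff every vertex is cofinal, which by part (ii) is iff $\langle v\rangle=\Mn$ for all $v\in E^0$.

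The main obstacle will be the backward direction of part (i): extracting from an abstract nonzero element $x$ of $\langle u\rangle\cap\langle v\rangle$ an actual vertex $w$ of $E$ with $u\ge w$ and $v\ge w$. One has $x\le\sum_{i\in S}{}^i u$ and $x\le\sum_{j\in T}{}^j v$ for finite $S,T\subseteq\Z$, and the natural move is to transport everything to the acyclic stationary graph $\overline E$, apply the Confluence Lemma so that a confluent form $c$ witnesses both $x+(\text{junk})$ and $\sum_{i\in S} u_i$ rewriting to $c$, then invoke Lemma~\ref{aralem6}(i) to split $c$ compatibly; a generator $z_m$ in the presentation of $x$ must appear as a summand of something reached from a single $u_i$, yielding a path $u\to z$ in $E$, and symmetrically a path $v\to z$, so $w:=z$ works. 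Care is needed because "$x$ nonzero" must be used to guarantee $x$ has at least one vertex in its presentation and that this vertex is genuinely dominated (not cancelled) — here cancellativity of $\Mn$, noted after \eqref{yhoperagen}, is what makes the domination argument clean.
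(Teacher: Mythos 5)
Your treatment of part (ii), part (iii), and the forward implication of (i) is correct and essentially identical to the paper's (the paper passes to $M_E$ via the forgetful map \eqref{forgthryhr} where you use $M_{\overline E}$ via \eqref{forgthryhr2}; that difference is immaterial). The genuine gap is in the backward implication of (i), precisely at the point you flag as the main obstacle. From $z\le\sum_i {}^i u$ the Confluence Lemma only produces a common element $c$ with $\sum_i u_i\rightarrow c$ and $z+y\rightarrow c$, and Lemma~\ref{aralem6}(i) splits $c=c_1+c_2$ with $z\rightarrow c_1$, $y\rightarrow c_2$. This gives paths from $u$ to the vertices of $c_1$, i.e.\ to vertices \emph{downstream of} $z$ --- not to $z$ itself, since $z$ may have to be rewritten to reach the confluent form. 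Your step ``each generator \dots must be dominated by one summand, giving a path in $E$ from $u$ to (a vertex in the presentation of) $z$; choosing that vertex as $w$'' is false as stated: let $u$ and $z$ each emit exactly two edges into the same two sinks $a,b$; then $z=a(1)+b(1)=u$ in $\Mn$, so $z\in\langle u\rangle$, yet there is no path from $u$ to $z$. Moreover, even after correcting the claim to ``some vertex downstream of $z$ lies in $T(u)$'', the $u$-side and the $v$-side of your argument produce two such witnesses that need not coincide, and nothing in your plan forces a \emph{common} vertex, which is what downward directedness requires.

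The paper closes exactly this hole with one further application of confluence. Writing $\sum_i {}^i u=z+s$ and $\sum_j {}^j v=z+t$ and passing to the graph monoid, confluence and splitting give $z\rightarrow c_1$ and $z\rightarrow d_1$ (from the $u$-side and $v$-side respectively), with $c_1=z=d_1$ in the monoid; applying Lemma~\ref{aralem6}(ii) once more to $c_1$ and $d_1$ yields $e$ with $c_1\rightarrow e$ and $d_1\rightarrow e$, hence $\sum_i u_i\rightarrow e+c_2$ and $\sum_j v_j\rightarrow e+d_2$. Since every vertex occurring in any element reachable from $\sum_i u_i$ (resp.\ $\sum_j v_j$) lies in $T(u)$ (resp.\ $T(v)$), every vertex of the nonzero element $e$ lies in $T(u)\cap T(v)$, and any one of them serves as $w$. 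Insert this merging step (and drop the claim that $w$ can be taken to be $z$ itself) and your argument goes through.
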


\begin{proof}

(i) Suppose $u$ and $v$ are downward directed, i.e., there is an $w\in E^0$ such that $v\geq w$ and $u\geq w$. Thus there is a path $\alpha$ with $s(\alpha)=v$ and $r(\alpha)=w$. This gives that $v=w(k)+t$ for some $k\in \mathbb Z$ and $t\in \Mn$. Thus $w \in \langle v \rangle$. Similarly $w\in \langle u \rangle$ and so  $\langle u \rangle  \cap \langle v \rangle \not = 0$. 

Conversely, suppose $0 \not= a\in \langle u \rangle   \cap \langle v \rangle$. Since $a$ is a sum of vertices (with given shifts), and $\langle v \rangle$ and $\langle u \rangle$ are order-ideals, one can find a vertex $z \in  \langle u \rangle\cap \langle v \rangle$. 
Thus $\sum_{i} {}^i v = z+t$ and $\sum_{j} {}^j u = z+s$ in $\Mn$. Passing to $M_E$ via the forgetful function~\ref{forgthryhr}, we have $nv=z+ t'$ and $mu=z+s'$, for  $t',s' \in M_E$ and $m,n\in \mathbb N$. By the confluence property, Lemma~\ref{aralem6}, there are $c, d \in F_E$ such that 
$nv \rightarrow c$, $z+t' \rightarrow c$ and $mu \rightarrow d$, $z+s' \rightarrow d$. By Lemma~\ref{aralem6}, one can write $c=c_1+c_2$ and $d=d_1+d_2$, such that $z\rightarrow c_1$, $t'\rightarrow c_2$ and $z\rightarrow d_1$, $t'\rightarrow d_2$. Since in $M_E$, we have $z=c_1=d_1$, again by the confluence property, there is an $e\in F_E$ such that $c_1\rightarrow e$ and $d_1\rightarrow e$. Hence $z\rightarrow e$. Now 
\begin{align*}
nv &\longrightarrow c_1+c_2 \longrightarrow e+c_2\\
mu &\longrightarrow d_1+d_2 \longrightarrow e+d_2.
\end{align*}
One more use of \ref{aralem6} shows that all the vertices appearing in $e$ are in both the tree of $v$ and the tree of $u$. Thus $u$ and $v$ are downward directed. 

\medskip

(ii) This follows immediately from Lemma~\ref{localcofin}.

\medskip

(iii) This follows from part (ii). 
\end{proof}

For a $\Gamma$-monoid $M$, a $\Gamma$-order-ideal $N \subseteq M$ is called \emph{prime} if for any $\Gamma$-order-ideals $N_1, N_2 \subseteq M$, $N_1\cap N_2 \subseteq N$ implies that $N_1\subseteq N$ or $N_2\subseteq N$.  In case of $\Mn$,  the lattice isomorphism~(\ref{latticeisosecideal3}), immediately implies that prime order-ideals of $\Mn$ are in one-to-one correspondence with the graded prime ideals of $L_F(E)$. 

Recall that one can give an element-wise description for a prime ideal of a ring. Namely, an ideal $I$ of a ring $A$ is prime if for any $a, b\not \in I$, there is an $r\in R$ such that $arb \not \in I$. We have a similar description in the setting of $\Mn$ demonstrating how the monoid structure of $\Mn$ mimics the algebraic structure of $L_F(E)$.

\begin{lem} \label{restrictedlatticeiso} Let $E$ be a row-finite graph. Then the order-ideal $I$ of $\Mn$ is prime if and only if for any $a,b \not \in I$, there is a $c \not \in I$ and $n,m \in \mathbb Z$, such that ${}^n c \leq a$ and ${}^m c \leq b$.
\end{lem}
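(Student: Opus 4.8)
The plan is to prove both directions by translating between the monoid $\Mn$ and the graph $E$, using the lattice isomorphisms \eqref{latticeisosecideal2}, \eqref{latticeisosecideal3} together with the element-wise characterisation of prime ideals of $L_F(E)$ (which in turn is governed by downward directedness of hereditary saturated subsets). The key bridge is Lemma~\ref{localcofin}: for a vertex $z$ and a hereditary saturated set $H$, $z\in H$ exactly when $\langle z\rangle\subseteq \langle H\rangle$, so containment of principal order-ideals $\langle c\rangle\subseteq\langle a\rangle$ is the monoid-level manifestation of ``$c$ lies in the hereditary saturated closure of the support of $a$''.

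\emph{Forward direction.} Suppose $I$ is prime and $a,b\notin I$. Consider the order-ideals $\langle a\rangle$ and $\langle b\rangle$ (recall from \eqref{oridealhj} that $\langle a\rangle=\{x\mid x\le\sum {}^i a\}$). Since $I$ is hereditary and $a\notin I$, we have $\langle a\rangle\not\subseteq I$, and likewise $\langle b\rangle\not\subseteq I$. By primeness $\langle a\rangle\cap\langle b\rangle\not\subseteq I$, so pick $0\ne d\in(\langle a\rangle\cap\langle b\rangle)\setminus I$. Writing $d$ as a sum of shifted vertices and using that both $\langle a\rangle,\langle b\rangle,I$ are order-ideals, one of the summands $z(k)$ of $d$ already lies in $\langle a\rangle\cap\langle b\rangle$ and not in $I$; set $c=z(0)$, which is still not in $I$ since $I$ is $\mathbb Z$-invariant. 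Now $z(k)\in\langle a\rangle$ means $z(k)\le\sum_i {}^i a$; passing to the covering graph $\overline E$ via \eqref{forgthryhr2} and running the confluence argument exactly as in the proof of Lemma~\ref{dowdirtwo}(i), one extracts a single shift $n$ with ${}^n c\le a$ (the support of $a$ reaches $z$ along a path, giving $a=z(n)+t$ in $\Mn$), and similarly an $m$ with ${}^m c\le b$.

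\emph{Converse.} Suppose the element-wise condition holds; let $N_1,N_2$ be order-ideals with $N_1\cap N_2\subseteq I$ and suppose $N_1\not\subseteq I$, $N_2\not\subseteq I$. Choose $a\in N_1\setminus I$ and $b\in N_2\setminus I$. By hypothesis there are $c\notin I$ and $n,m$ with ${}^n c\le a\in N_1$ and ${}^m c\le b\in N_2$; since $N_1,N_2$ are order-ideals closed under the $\mathbb Z$-action, $c\in N_1$ and $c\in N_2$, hence $c\in N_1\cap N_2\subseteq I$, contradicting $c\notin I$. Therefore $N_1\subseteq I$ or $N_2\subseteq I$, i.e.\ $I$ is prime.

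\emph{Main obstacle.} The delicate point is the forward direction: extracting from the inequality $z(k)\le\sum_i{}^i a$ in $\Mn$ a \emph{single} shift $n$ with ${}^n c\le a$, rather than merely $z\in\langle a\rangle$. This requires moving the equation into $M_{\overline E}$, applying the Confluence Lemma~\ref{aralem6}(ii) and the splitting property \ref{aralem6}(i) to isolate which summand of $a$ the vertex $z$ descends from, and then reading back a concrete path $v\to z$ in $E$ that yields $a = z(n)+t$; this is precisely the mechanism already used in Lemma~\ref{dowdirtwo}(i), so I would cite that proof's structure and adapt it. A secondary subtlety is handling the case where the relevant summand of $d$ is a sink, but as in Lemma~\ref{nalem} and Proposition~\ref{goldenprop} the sink simply gives $z(n)\le a$ directly with no further transformation needed.
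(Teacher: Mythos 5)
Your converse direction is correct and is essentially verbatim the paper's argument. The problem is in the forward direction, and it sits exactly at the step you flag as delicate: the claim that for a vertex $z$, membership $z(k)\in\langle a\rangle$, i.e. $z(k)\le\sum_i{}^i a$, can be upgraded to ${}^n z\le a$ for a \emph{single} shift $n$. This is false in general, and the parenthetical justification (``the support of $a$ reaches $z$ along a path'') is where it breaks: the $\mathbb Z$-order-ideal $\langle a\rangle$ corresponds to the hereditary \emph{saturated} closure of the support of the shifts of $a$, and saturation can bring in vertices that are not reachable from $\operatorname{supp}(a)$ at all. Concretely, let $E$ have one vertex $v$ with two edges to two sinks $u_1,u_2$, and take $a=u_1(0)+u_2(1)$. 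Then $v(-1)\le {}^{-1}a+a$, so $v\in\langle a\rangle$; but in $M_{\overline E}$ (free on the sinks $u_{1,i},u_{2,i}$) we have $v(n)=u_1(n+1)+u_2(n+1)$, which is never $\le u_1(0)+u_2(1)$, so no single $n$ gives ${}^n v\le a$. Your confluence argument only shows that the flow of $z(k)$ is covered by the \emph{union} of the shifts ${}^i a$, with different branches possibly absorbed by different shifts, which is strictly weaker than what the lemma asserts. Nothing in your construction prevents the summand $z(k)$ of $d$ from being exactly such a ``saturation-only'' vertex. The remark about sinks is not the issue.

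There is also a second, related gap: even if you repair the single-shift extraction for $a$ and for $b$ separately, you still need one common element $c\notin I$ working for both. The paper gets this by leaving the monoid: via the lattice isomorphism \eqref{latticeisosecideal3} and \cite[Proposition~4.1.4]{AAS}, primeness of $I=\langle H\rangle$ forces $E^0\setminus H$ to be \emph{downward directed}; then one picks support vertices $v_1\notin H$ of $a$ and $w_1\notin H$ of $b$, a common descendant $z\notin H$, and the two connecting paths give the two shifts directly. Your purely monoid-internal route would need a substitute for this downward-directedness outside $H$ (Lemma~\ref{dowdirtwo}(i) gives a common descendant of $v_1$ and $w_1$, but with no control that it avoids $I$), so as written the forward implication does not go through.
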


\begin{proof}

$\Rightarrow$ Suppose $I$ is a prime order-ideal. A combination of correspondence~(\ref{latticeisosecideal3}) and  \cite[Proposition~4.1.4]{AAS} give that $I= \langle H \rangle$, where $H$ is a hereditary saturated subset such that $E^0 \backslash H$ is downward directed. Let $a,b \in \Mn$ such that $a \not \in I$ and $b\not \in I$. Since $a=\sum v_i(k_i)$ and $b=\sum w_j(k'_j)$ are sum of vertices (with given shifts), and $I$ is an order-ideal, then  (possibly after a re-arrangement) $v_1 \not \in H$ and $w_1 \not \in H$. Thus there is a $z\not \in H$ such that $v_1\geq z$ and $w_1\geq z$. Thus there is a path $\alpha$ with $s(\alpha)=v_1$ and $r(\alpha)=z$. This shows that in $\Mn$, for some $i\in \mathbb Z$, ${}^i z \leq v$ and consequently ${}^{i+k_1}z  \leq v_1(k_1) \leq a $. Similarly for a $j \in \mathbb Z$, ${}^j z \leq w_1$ and consequently ${}^{j+k'_1}z  \leq w_1(k'_1) \leq b$.

$\Leftarrow$ Suppose $I_1$ and $I_2$ are order-ideals such that $I_1\cap I_2 \subseteq I$. If $I_1\nsubseteq I$ and $I_2\nsubseteq I$, then there are $a \in I_1 \backslash I$ and $b \in I_2 \backslash I$. By the property of $I$, there is a $c\not \in I$ such that  ${}^n c \leq a$ and ${}^m c \leq b$. Since $I_1$ and $I_2$ are order-ideals $c\in I_1\cap I_2$ and thus $c\in I$ a contradiction. Thus $I$ is prime.  
\end{proof}

Recall the notions of a line-point from \S\ref{graphsec} and the minimal elements of monoids from \S\ref{monsec}. By~\cite[Proposition~2.6.11]{AAS}, a minimal left ideal of a Leavitt path algebra $L_F(E)$ is isomorphic to $L_F(E)v$, where $v$ is a line-point. Here we show that we can distinguish these vertices in the monoid $\Mn$.

\begin{lem}\label{tminimi}
Let $E$ be a row-finite graph and $L_F(E)$ its associated Leavitt path algebra. 
\begin{enumerate} [\upshape(i)]

\item The vertex $v$ has no bifurcation if and only if $v\in \Mn$ is minimal.

\medskip 

\item The vertex $v$ is a line-point if and only if  $v\in \Mn$ is minimal and aperiodic. 

\medskip 

\item The left ideal $L_F(E)v$ is minimal if and only if $v\in \Mn$ is minimal and aperiodic. 

\end{enumerate}
\end{lem}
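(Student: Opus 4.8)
The plan is to prove the three equivalences by establishing a cycle of implications, using the covering-graph description and the already-proven results about periodic and minimal elements. First I would handle part (i). For the forward direction, suppose $v$ has no bifurcation, so every $u \in T(v)$ satisfies $|s^{-1}(u)| \le 1$. I would pass to the covering graph $\overline E$ via the isomorphism~(\ref{forgthryhr2}), where $v$ corresponds to $v_0$. Since $v$ has no bifurcation, the tree of $v_0$ in $\overline E$ is a simple path (or ray), so in $M_{\overline E}$ the only reductions $v_0 \to c$ replace $v_0$ by a single vertex further along the path. Using the Confluence Lemma (Lemma~\ref{aralem6}), if $b \le v$ in $\Mn$ with $b \ne 0$, then $b$ reduces (together with the complement) to a common form with $v_0$; tracing through, $b$ must itself be a single vertex $w_k$ on the path from $v_0$, hence $w_k \to v_0$-side forces $v \le b$ as well, so $v$ is minimal. (One must be slightly careful that ``minimal'' here is in the sense of \S\ref{monsec}: since $\Mn$ is cofinal and cancellative, $v$ minimal means $0 \ne b \le v \Rightarrow v = b$, which is what the argument gives.) For the converse, suppose $v$ has a bifurcation, i.e. some $u \in T(v)$ with $|s^{-1}(u)| \ge 2$. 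Then $v \ge u$ gives $u(k) \le v$ in $\Mn$ for some $k$, and applying the relation~(\ref{monoidrelation2}) at $u$ shows $u(k) = \sum_{e \in s^{-1}(u)} r(e)(k+1)$ is a sum of at least two nonzero terms, so $u(k)$ is not an atom; since in our cofinal cancellative monoid minimal elements are atoms, and one can check $r(e)(k+1) < u(k) \le v$ strictly, $v$ is not minimal.

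Next I would do part (ii). Given part (i), $v$ is a line-point iff ($v$ has no bifurcation and $v$ does not end at a cycle) iff ($v \in \Mn$ minimal and $v$ does not end at a cycle). So it suffices to show: assuming $v$ has no bifurcation, $v$ ends at a cycle iff $v$ is periodic. If $v$ has no bifurcation and $v \ge u$ where $u$ lies on a cycle $C$, then since there is no bifurcation along the way, $C$ has no exit, and by Proposition~\ref{goldenprop}(i) (or directly, as in its proof) the element $r(C)$-sum, hence also $v$ via $v = u(k) + \cdots$, becomes periodic: more precisely $v \ge u$ with $u$ on an exit-free cycle of length $\ell$ gives $u = u(\ell)$, so shifting $v$ appropriately, $^{-\ell}$ acts trivially after reducing $v$ to $u(k)$-level; one needs $v = u(k)$ exactly, which holds because no bifurcation means $v$ reduces to a single vertex, and that vertex, being on the path into $C$, eventually lands on $C$ and becomes periodic. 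Conversely, if $v$ is periodic then by Proposition~\ref{goldenprop}(i) there is a cycle with no exit somewhere in the graph; but I need it to be in $T(v)$. Here is where I would be careful: since $v$ has no bifurcation, $v$ reduces in $\Mn$ to a single vertex $w_k$ for $w \in T(v)$, and periodicity of $v$ means $w_k$ is periodic, so the cycle-with-no-exit produced by Proposition~\ref{goldenprop} is based at a vertex reachable from $w$, hence from $v$; thus $v$ ends at a cycle. So $v$ is not a line-point iff $v$ is not (minimal and aperiodic).

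Finally, part (iii) is immediate from part (ii) together with the cited result~\cite[Proposition~2.6.11]{AAS}: the left ideal $L_F(E)v$ is minimal if and only if $v$ is a line-point, and by (ii) this happens if and only if $v \in \Mn$ is minimal and aperiodic.

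The main obstacle I anticipate is the ``no bifurcation'' bookkeeping in the covering graph: one must argue cleanly that when $v$ has no bifurcation, \emph{every} reduction sequence out of $v_0$ in $M_{\overline E}$ (equivalently every presentation of $v$ in $\Mn$) collapses $v$ to a single vertex lying in $T(v)$, and conversely extract from a bare existence statement (Proposition~\ref{goldenprop} gives \emph{some} exit-free cycle in $E$) the sharper fact that the relevant cycle sits in $T(v)$. Both points follow from Lemma~\ref{aralem6}(i) (the splitting property) applied carefully, but stating them precisely — in particular that the bijection $\rho$ in the proof of Proposition~\ref{goldenprop}(i), when applied to the single-vertex reduction of $v$, is a cycle in $T(v)$ — is the crux; the rest is routine translation between $\Mn$, $M_{\overline E}$, $M_E$, and the graph.
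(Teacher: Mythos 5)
Your proposal is correct and takes essentially the same approach as the paper: part (i) via the confluence/splitting property of Lemma~\ref{aralem6} (the paper passes to $M_E$, you to $M_{\overline E}$, an immaterial difference), part (ii) by combining (i) with a single-vertex confluence argument in the covering graph, and part (iii) by citing \cite[Proposition~2.6.11]{AAS}. The only reorganization is that you phrase (ii) as ``ends at a cycle iff periodic'' under the no-bifurcation hypothesis and propose to extract the cycle from the internals of Proposition~\ref{goldenprop}; the point you flag as the crux is precisely the paper's short direct argument, namely that $v_0$ and $v_n$ reduce to single vertices in $M_{\overline E}$ whose common reduct, by stationarity, lies on an exit-free cycle inside $T(v)$.
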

\begin{proof}

(i) Suppose $v\in E^0$ has no bifurcation. If $v\in \Mn$ is not minimal then there is an $a \in \Mn$ such that $a < v$. Thus $a+x=v$ for some $x\not =0$. 
Passing the equality to $M_E$ by the forgetful function~\ref{forgthryhr},  and invoking the confluence property of $M_E$, Lemma~\ref{aralem6}, we get an $c \in F$ such that $a+x \rightarrow c$ and $v\rightarrow c$. Since $v$ has no bifurcation, $c$ has to be a vertex and thus $a$ has to be this vertex and $x=0$ which is a contradiction. 

Conversely, suppose that  $v\in \Mn$ is minimal.  If $v$ has a bifurcation, then pick the first $u \in T(v)$,  where this bifurcation occurs. Then 
\[v=u(k)=\sum_{\alpha \in s^{-1}(u)} r(\alpha)(k+1),\] where $k\in \mathbb N$ is the length of the path connecting $v$ to $u$.  Since  $|s^{-1}(u)| >1$ then $r(\alpha)(k+1) < v$ which is a contradiction. 

\medskip 

(ii) Suppose $v$ is a line-point. Then by (i), $v \in \Mn$ is minimal. If $v$ is periodic, then there is $n<0$ such that ${}^n v =v$. Passing the equality to $\overline E$ via the isomorphism~(\ref{forgthryhr2}),  since $v_0, v_n \in \overline E^0$
 are also line-points, using the congruence~(\ref{monoidrelation}) for this case,  we have $v_n\rightarrow w_{l+n}$ and $v_0\rightarrow w_{l+n}$ for n $w\in E^0$ and some $l \geq 0$. Since $\overline E$ is stationary, we have $v_0 \rightarrow w_l$ and $w_{l+n} \rightarrow w_{l}$. This implies that $w$ is on a cycle and thus $v$ connects to a cycle which is a contradiction. Thus $v$ is aperiodic. 
 
Conversely, suppose that $v\in \Mn$ is minimal and aperiodic. By part (i) $v$ has no bifurcation. If $v$ connects to a cycle, then we have 
\[v=w(k)=w(k+l),\] where $k$ is the length of the path connecting $v$ to $w$, the based of the cycle, and $l$ is the length of the cycle. It follows that ${}^{-l} v=v$ which is a contradiction. Thus $v$ is a line-point. 

\medskip

(iii) By \cite[Proposition~2.6.11]{AAS}, the left ideal $L_F(E)v$ is minimal if and only if $v$ is a line-point. Part (ii) now completes the proof. 
\end{proof}

%

We close the paper with the following theorem which justifies why a conjecture such as Conjecture~\ref{conj1} could be valid. 

\begin{thm}\label{mainthemethe}
Let $E_1$ and $E_2$ be row-finite graphs. Suppose there is a $\mathbb Z$-module isomorphism $\phi: M^{\gr}_{E_1} \rightarrow M^{\gr}_{E_2}$. Then we have the following: 

\begin{enumerate} [\upshape(i)]

\item $E_1$ has Condition (L) if and only if $E_2$ has condition (L).

\medskip

\item $E_1$ has Condition (K) if and only if $E_2$ has condition (K).

\medskip

\item there is a one-to-one correspondence between the graded  ideals of $L_F(E_1)$ and $L_F(E_2)$. 

\medskip 

\item there is a one-to-one correspondence between the non-graded  ideals of $L_F(E_1)$ and $L_F(E_2)$. 

\medskip

\item there is a one-to-one correspondence between the isomorphism classes of minimal left/right ideals of $L_F(E_1)$ and $L_F(E_2)$. 

\end{enumerate}
\end{thm}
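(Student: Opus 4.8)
\textbf{Proof strategy for Theorem~\ref{mainthemethe}.}
The plan is to exploit the fact that the $\mathbb{Z}$-module isomorphism $\phi$ induces a lattice isomorphism on $\mathbb{Z}$-order-ideals and preserves all the monoid-theoretic invariants characterized in the preceding sections, so that each graph- or algebra-level property in the list has already been translated into a property of $M^{\gr}$ that $\phi$ manifestly respects. For part (i), I would invoke Corollary~\ref{conLm}(i): Condition (L) for $E_i$ is equivalent to $\mathbb{Z}$ acting freely on $M^{\gr}_{E_i}$, and since $\phi$ is a $\mathbb{Z}$-module isomorphism it carries a relation ${}^n a = a$ to ${}^n \phi(a) = \phi(a)$ and back, so freeness transfers. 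For part (ii), the same argument applies to every quotient: $\phi$ restricts to an isomorphism between the lattices $\LL(M^{\gr}_{E_1})$ and $\LL(M^{\gr}_{E_2})$ of $\mathbb{Z}$-order-ideals (it sends order-ideals to order-ideals because it is an order-isomorphism respecting the action), hence induces $\mathbb{Z}$-module isomorphisms $M^{\gr}_{E_1}/I \cong M^{\gr}_{E_2}/\phi(I)$ for all order-ideals $I$; then Corollary~\ref{conLm}(ii) gives Condition (K).

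For part (iii), combine the lattice isomorphism $\LL(M^{\gr}_{E_i}) \cong \LL^{\gr}(L_F(E_i))$ from~(\ref{latticeisosecideal3}) with the induced isomorphism $\LL(M^{\gr}_{E_1}) \cong \LL(M^{\gr}_{E_2})$ coming from $\phi$; composing yields the desired bijection between graded ideals. For part (iv), I would use Proposition~\ref{propnongra} together with the refinement needed to count: $L_F(E_i)$ has a non-graded ideal iff some quotient $M^{\gr}_{E_i}/I$ has a periodic element, but more precisely the non-graded ideals containing a given graded ideal $I(H)$ correspond (via \cite[Proposition~2.8.11]{AAS}) to subsets of the set of cycles in $E/H$ without exit, which by Proposition~\ref{goldenprop} applied to $M^{\gr}_{E/H} \cong M^{\gr}_{E_i}/\langle H\rangle$ is detected by periodic elements of that quotient; since $\phi$ matches order-ideals and preserves periodicity in every quotient, the full lattice of non-graded ideals is matched. (Here one should be a little careful to match not just the existence but the actual poset of cycles-without-exit; this is where I would lean on Proposition~\ref{goldenprop}(i) and the orbit description, identifying cycles without exit in $E/H$ with orbits of minimal periodic elements of the quotient monoid — the same correspondence asserted in the second Example of \S\ref{monsec}.)

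For part (v), I would use Lemma~\ref{tminimi}(iii): isomorphism classes of minimal left ideals of $L_F(E_i)$ correspond to line-points of $E_i$, which in turn correspond to elements of $M^{\gr}_{E_i}$ that are both minimal and aperiodic. Since $\phi$ is a $\mathbb{Z}$-module isomorphism it preserves minimality (an order-isomorphism of monoids preserves the order, hence minimal elements) and aperiodicity (it respects the action), so it restricts to a bijection between the minimal aperiodic elements of the two monoids. One subtlety is that two distinct line-points $v, w$ can give isomorphic minimal left ideals $L_F(E)v \cong L_F(E)w$ precisely when $v$ and $w$ lie on the same line (equivalently $v = {}^k w$ in $M^{\gr}$ for suitable $k$, or rather $v$ and $w$ represent the same element up to the monoid relations); so the correct statement is a bijection between \emph{$M_E$-classes} of minimal aperiodic elements, i.e. one passes through the forgetful map~(\ref{forgthryhr}) — and since $\phi$ need not commute with the forgetful maps, the honest argument is that minimal aperiodic elements of $M^{\gr}_{E_i}$ that become equal in $M_{E_i}$ are exactly those in a common $\mathbb{Z}$-orbit together with their ``downstream'' identifications, which $\phi$ respects because it is a monoid isomorphism. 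I expect this bookkeeping in parts (iv) and (v) — correctly matching the combinatorial data (cycles-without-exit, line-points up to tail equivalence) rather than merely its existence — to be the main obstacle; parts (i)--(iii) are essentially immediate from the cited results.
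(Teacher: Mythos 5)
Your proposal follows essentially the same route as the paper's proof: parts (i)--(ii) from Corollary~\ref{conLm}, part (iii) from the lattice isomorphism~(\ref{latticeisosecideal3}) composed with the induced isomorphism on order-ideals, part (iv) via the correspondence between cycles without exit in quotient graphs and orbits of minimal periodic elements of the quotient monoids combined with the Structure Theorem \cite[Proposition 2.8.11]{AAS}, and part (v) via line-points and minimal aperiodic elements through Lemma~\ref{tminimi}. Two small caveats: the cycle/orbit bijection you cite from the Example in \S\ref{monsec} is not a prior result but is established inside the paper's proof of (iv) itself (minimality of $c_v$, the relation ${}^{|c|}c_v=c_v$, injectivity via disjointness of trees, surjectivity via the no-bifurcation argument of Lemma~\ref{tminimi}(ii)), so that step still needs to be written out, and it must also carry the polynomial (``external'') data, which matches trivially over the common field $F$; and your concern in (v) about forgetful maps is unnecessary, since a line-point $v$ already equals ${}^{k}w$ in $M^{\gr}_E$ for any vertex $w$ downstream of it, so partitioning the minimal aperiodic elements into $\mathbb Z$-orbits, as the paper does, already makes the identifications you worry about.
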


\begin{proof}

(i) and (ii) follows from Corollary~\ref{conLm}. 

\medskip

(iii) The correspondence between the graded ideals of Leavitt path algebras follows from~(\ref{latticeisosecideal3}) and that a $\mathbb Z$-module isomorphism of modules induces a lattice isomorphism between the set of their order-ideals.  

\medskip

(iv) We first show that the set of cycles without exits in $E$ is in one-to-one correspondence with the orbits of minimal periodic elements of $\Mn$. Notice that a minimal element $a$ of $\Mn$ can be represented by some $v(k)$, for an $v\in E^0$ and $k\in \mathbb Z$. Consider the set $S$ of all minimal and periodic elements of $\Mn$ and partition this set by their orbits, i.e., $S=\sqcup_{v\in S} O(v)$, where $O(v)=\{{}^i v,  \mid i \in \mathbb Z \}$. Let $C$ be the set of all cycles without exits in $E$. For a cycle $c\in C$, denote by $c_v$ a vertex on the cycle (there is no need to fix this base vertex). We show that there is a bijection between the sets 
\begin{align*}
\phi: C &\longrightarrow \{O(v) \mid v \in  S \},\\
c &\longmapsto O(c_v)
\end{align*}
such that the length of the cycle $c$, $|c|$, is the same as the order of the corresponding orbit, $|O(c_v)|$. 
First note that for a cycle $c$ without exit, $c_v$ is a vertex with no bifurcation, thus by Lemma~\ref{tminimi}, $c_v$ is minimal in $\Mn$. Furthermore, one can observe that 
${}^{|c|} c_v =c_v$ in $\Mn$, which also shows that $|O(c_v)| = |c|$. Further, choosing another base point $w$ on the cycle $c$, we have $O(c_v)=O(c_w)$ and thus the map $\phi$ is well-defined. 
Now suppose $c$ and $d$ are two distinct cycles without exits. If $O(c_v)=O(d_v)$, then there is a $k\in \mathbb Z$ such that for the vertices $c_v$ and $d_v$  we have ${}^k c_v=d_v$ in $\Mn$.  Passing to $M_E$ via the forgetful function~\ref{forgthryhr}, the equation $c_v=d_v$ implies that $T(c_v) \cap T(d_v) \not = \emptyset$, which can't be the case. Thus the map $\phi$ is injective. On the other hand, if $v$ is minimal and periodic, then,  by Lemma~\ref{tminimi}, and its proof of part (ii), $v$ has no bifurcation and is connected to a cycle $c$ without an exit. We show that $O(c_v)=O(v)$. Since $v$ has no bifurcation, $v$ connects to $c_v$ by a path $\alpha$ which has no exit. Thus ${}^{-|\alpha|} v= c_v$ and the claim follows. This shows that $\phi$ is also surjective. 

Applying this argument for the quotient graph $E/ H$, for a hereditary and saturated subset $H$, gives a one-to-one correspondence between cycles whose exits are in $H$ and the minimal periodic elements of $\Mn / I$, where $I =\langle H \rangle $.

Now since a $\mathbb Z$-module isomorphism of modules preserves the class of orbits of minimal and periodic elements, an isomorphism of graded monoids of $E_1$ and $E_2$ gives a one-to-one correspondence between the cycles without exits (of the same length) between the graphs $E_1$ and $E_2$. Furthermore combining this with part (iii), we obtain a one to one correspondence between cycles whose exits are in $H$ in $E_1^0$ and the cycles whose exits are in the corresponding hereditary saturated subset in $E^0_2$. 

Finally, by the Structure Theorem for ideals (\cite[Proposition 2.8.11]{AAS}, \cite{rangaswamy2}), the non-graded ideals in a Leavitt path algebra are characterised by the ``internal data'' of hereditary saturated subsets $H$, a non-empty set $C$ of cycles whose exits are in $H$ and the ``external'' data of polynomials $p_c(x) \in F[x], c\in C$. Combining this with the correspondences established above, completes the proof. 

\medskip

(v)  Consider the set $S$ of all minimal and aperiodic elements of $\Mn$ and partition this set by their orbits, i.e., $S=\sqcup_{v\in S} O(v)$. We leave it to the reader to observe that there is a one-to-one correspondence between these orbits of the graph $E$ and the class of minimal left/right ideals $L_F(E)$ by invoking Lemma~\ref{tminimi} and \cite[Proposition~2.6.11]{AAS} that a minimal left ideal of a Leavitt path algebra $L_F(E)$ is isomorphic to $L_F(E)v$, where $v$ is a line-point. The proof now follows from the fact that a $\mathbb Z$-module isomorphism of modules preserves the class of orbits of minimal and aperiodic elements. 
\end{proof}

Note that Theorem~\ref{mainthemethe} is in line with what we should obtain from the (conjectural) statement that if $M^{\gr}_{E_1}\cong M^{\gr}_{E_2}$  as a $\mathbb Z$-modules (or equivalently $K_0^{\gr}(L(E_1) \cong K_0^{\gr}(L(E_2)$) then the Leavitt path algebras $L_F(E_1)$ and $L_F(E_2)$ are graded Morita equivalent. Indeed by \cite[Theorem~2.3.8]{hazi}, the graded Morita equivalence lifts to Morita equivalence:  
\begin{equation}
\xymatrix{
\Grr L_F(E_1) \ar[rr]  \ar[d]_{U}&& \Grr  L_F(E_2) \ar[d]^{U}\\
\Modd  L_F(E_1) \ar[rr]  && \Modd  L_F(E_2).
}
\end{equation}
Combining this with the consequences of Morita theory, we have a one-to-one correspondence between the ideals of $L_F(E_1)$ and $L_F(E_2)$ as confirmed by Theorem~\ref{mainthemethe}.

\begin{rmk} 
\label{rmk}
A graded version of $M_E$ \cite [\S 5C]{ahls} was defined to be the
abelian monoid generated by $\{v(i) \mid v\in E^0, i\in \mathbb Z\}$ subject to the relations
\begin{equation}\label{monoidrelation2}
v(i)=\sum_{e\in s^{-1}(v)}r(e)(i-1), 
\end{equation}
for every $v\in E^{0}$ that is not a sink. We denote it by ${\Mn}'$. There are $\mathbb Z$-module isomorphisms \begin{align}\label{yhoperagen}
{\Mn}' &\cong  M_{\overline{E}} \cong \mathcal V(L_F(\overline E))\cong\, \mathcal V^{\gr}(L_F(E)),
\\v(i) &\longmapsto v_i\longmapsto  L_F(\overline E) v_i    \longmapsto\big (L_F(E)v\big) (-i), \notag
\end{align} see \cite[Proposition~5.7]{ahls}. We correct here that the isomorphism ${\Mn}' \cong M_{\overline{E}}$ should be given by $v(i)\mapsto v_i$ and that  the $\mathbb Z$-action on ${\Mn}'$ given by Equation (5-10) in \cite{ahls} should be ${}^n {v(i)}=v(i-n)$ for $n, i\in\mathbb Z$ and $v\in E^0$.

In this note the relations for the graded monoid $\Mn$ are slightly different from that for ${\Mn}'$. However $\Mn$ and ${\Mn}'$ are isomorphic via $v(i)\mapsto v(-i)$ as $\Z$-modules. Note that $\Mn$ has a natural $\Z$-action ${}^n v(i)=v(i+n)$.  If we follow the notation of covering graph in \cite[\S 5B]{ahls}, we can obtain $\Mn\cong \VV^{\gr}(L(E))$ similarly as \cite[Propositon 5.7]{ahls}.  
\end{rmk}

\section{Acknowledgements} The authors would like to acknowledge Australian Research Council grant DP160101481. They would like to thank Anthony Warwick from Western Sydney University who enthusiastically took part in the discussions related to this work.

\end{document}